\title{Subordinacy Theory on Star-Like Graphs}
\author{Netanel Levi\footnote{Institute of Mathematics, The Hebrew University of Jerusalem, Jerusalem, 91904, Israel.
		Supported in part by the Israel Science Foundation (Grant No.\ 1378/20) and in part by the United States-Israel Binational Science Foundation
		(Grant No.\ 2020027), Email: netanel.levi@mail.huji.ac.il}}
\numberwithin{equation}{section}
\numberwithin{figure}{section}
\theoremstyle{plain}
\newtheorem{theorem}{\protect\theoremname}[section]
\theoremstyle{definition}
\newtheorem{definition}[theorem]{\protect\definitionname}
\theoremstyle{definition}
\newtheorem*{definition*}{\protect\definitionname}
\theoremstyle{remark}
\newtheorem{remark}[theorem]{\protect\remarkname}
\theoremstyle{plain}
\newtheorem{lemma}[theorem]{\protect\lemmaname}
\theoremstyle{plain}
\theoremstyle{plain}
\newtheorem{prop}[theorem]{\protect\propositionname}
\theoremstyle{plain}
\newtheorem{claim}[theorem]{\protect\claimname}
\DeclareMathOperator{\im}{Im}
\DeclareMathOperator{\tr}{tr}
\DeclareMathOperator{\diag}{diag}
\DeclareMathOperator{\vspan}{sp}
\DeclareMathOperator{\rank}{rank}
\tikzstyle{vertex}=[circle, draw, fill=black, inner sep=0pt, minimum size=6pt]
\newcommand{\vertex}{\node[vertex]}
\providecommand{\claimname}{Claim}
\providecommand{\corollaryname}{Corollary} 
\providecommand{\definitionname}{Definition}
\providecommand{\lemmaname}{Lemma}
\providecommand{\propositionname}{Proposition}
\providecommand{\remarkname}{Remark}
\providecommand{\theoremname}{Theorem}
\begin{document}
\maketitle
\sloppy
\begin{abstract}
	We study Jacobi matrices on star-like graphs, which are graphs that are given by the pasting of a finite number of half-lines to a compact graph. Specifically, we extend subordinacy theory to this type of graphs, that is, we find a connection between asymptotic properties of solutions to the eigenvalue equations and continuity properties of the spectral measure with respect to the Lebesgue measure. We also use this theory in order to derive results regarding the multiplicity of the singular spectrum.
\end{abstract}
\section{Introduction}
In this work, we are interested in studying spectral properties of Jacobi matrices on certain types of graphs. Throughout, we assume that the number of vertices of the graph is infinite. Given a graph $G=\langle V,E\rangle$ where $V$ is the set of vertices and $E$ is the set of edges, a Jacobi matrix $J$ on $G$ is given by two sets of real numbers, $\left\{b_v\right\}_{v\in V}$ and $\left\{a_e\right\}_{e\in E}$. $J$ acts on $\ell^2\left(G\right)$ by
\begin{equation}\label{op_eq}
	J(\varphi)(u)=\sum\limits_{w\sim u}a_{\left(u,w\right)}\varphi_w+b_u\varphi_u.
\end{equation}
For simplicity, we assume throughout that $\left\{b_v\right\}_{v\in V}$ and $\left\{a_e\right\}_{e\in E}$ are bounded, and in that case $J$ is a bounded self-adjoint operator. By the spectral theorem (see, e.g., \cite[Chapter 6]{BS}), $J$ gives rise to a projection-valued measure $P$ on $\sigma(H)$, which is called {\it the spectral measure} of $J$. Our goal is to study continuity properties of $P$ via the asymptotics of solutions to the eigenvalue equation. Namely, given a Jacobi matrix $J$ and $E\in\mathbb{R}$, we study solutions to the formal difference equation
\begin{equation}\label{ev_eq}
	J\varphi=E\varphi,
\end{equation}
where by formal we mean that $\varphi$ need not be in $\ell^2\left(G\right)$. We will distinguish between supports of the absolutely continuous and singular parts of $P$ (with respect to the Lebesgue measure) by studying the asymptotic properties of these solutions.

The method extended in this work is known as subordinacy theory. It was developed in \cite{GP} for the case of continuum Schro\"{o}dinger operators on a half-line. In that case, continuity properties of the spectral measure are determined by comparing the growth of solutions which satisfy different boundary conditions to the differential equation associated with the operator. The discrete analogue of subordinacy theory was later developed to Jacobi matrices on $\mathbb{N}$ in \cite{KP}. In the latter case, the operator is simply given by a tridiagonal matrix whose entries are bounded and real-valued. Given such an operator $J$ and $\theta\in\left[0,\pi\right)$, the operator $J_\theta$ is defined by setting $J_\theta=J-\tan\left(\theta\right)\langle\delta_1,\cdot\rangle\delta_1$. The method of subordinacy enables one to examine continuity properties of the spectral measure of these operators by comparing formal solutions to the equation
\begin{equation}\label{ev_eq_1dim}
	J_{\theta}\varphi=E\varphi,\,\,\,\theta\in\left[0,\pi\right),\,\,\,E\in\mathbb{R}.
\end{equation}
Given $\theta\in\left[0,\pi\right)$, a solution $\varphi$ to (\ref{ev_eq_1dim}) can also be regarded as a solution to the same equation with $\theta=0$, along with the boundary condition
\begin{equation}\label{bcon_eq}
	\varphi_0\cos\theta+\varphi_1\sin\theta=0.
\end{equation}
Throughout, the solution to (\ref{ev_eq_1dim}) with $\theta=0$ will be referred to as the solution which satisfies a Dirichlet boundary condition.

We now turn to briefly describe the method of subordinacy. Let $\theta\in\left[0,\pi\right)$. Note that $\delta_1$ is a cyclic vector for $J_\theta$, namely \mbox{$\ell^2(\mathbb{N})=\overline{\vspan\left\{\delta_1,J_\theta\delta_1,J_\theta^2\delta_1,\ldots\right\}}$}, and so the spectral measure of $J_\theta$ is equivalent to that of $\delta_1$, in the sense that they have the same null sets.
Given $u:\mathbb{N}\to\mathbb{R}$ and $L>0$, denote
\begin{center}
	$\|u\|_L\coloneqq\left[\sum\limits_{n=1}^{[L]}\left|u_n\right|^2+(L-[L])\left|u_{[L]+1}\right|^2\right]^{1/2}$.
\end{center}
\begin{definition}
A solution $\psi$ to (\ref{ev_eq_1dim}) will be called {\it subordinate} if for any other linearly independent solution $\varphi$,
\begin{equation}\label{sub_eq}
	\underset{L\rightarrow\infty}{\lim}\frac{\|\psi\|_L}{\|\varphi\|_L}=0.
\end{equation}
\end{definition}
Denote by $\mu_\theta$ the spectral measure of $\delta_1$, and by $\left(\mu_\theta\right)_s,\left(\mu_\theta\right)_{ac}$ its singular and absolutely continuous parts (with respect to the Lebesgue measure) respectively. In \cite{KP}, it is proved that $\left(\mu_\theta\right)_s$ is supported on the set of energies for which the solution to (\ref{ev_eq}) is subordinate, and $\left(\mu_\theta\right)_{ac}$ is supported on the set of energies for which no subordinate solution exists. The theory was further developed in various directions. In \cite{G2}, it was extended to continuum Schr\"{o}dinger operators on $\mathbb{R}$. In \cite{JL1}, Jitomirskaya and Last present a strengthening of the theory in the discrete half-line case (i.e.\ Jacobi matrices on $\mathbb{N}$), and use it to relate the asymptotic properties of the solutions to continuity properties of the spectral measure with respect to various Hausdorff measures. Subordinacy is a very powerful tool as in many cases, the study of asymptotic properties of solutions to (\ref{ev_eq}) is more accessible, compared to classical methods such as the direct study of the Borel transform of $\mu$. It has many applications and generalizations, \cite{Bu,DKL,G1,JL2,KLS,Rem,Z} is a very partial list.

In this work, we extend the theory of subordinacy to a certain kind of graphs which we call {\it star-like}. Roughly speaking, a star-like graph $G$ is a graph which consists of a compact component $C$ along with a finite collection of half-lines attached to it. In Figure \ref{sg_fig} we give a few examples of star-like graphs. Although the compact component is not unique, our results do not depend on its choice. Thus, throughout we fix a compact component $C$ and refer to it as {\it the} compact component of $G$. For every $v\in C$, we denote by $G_v$ the half-line attached to $v$. If $v$ has no half-line attached to it, then $G_v=\left\{v\right\}$. Let $J$ be an operator of the form (\ref{op_eq}) on $\ell^2(G)$. We say that a non-trivial solution $\psi$ to (\ref{ev_eq}) is subordinate if and only if it is subordinate as a solution on $G_v$ for every $v\in C$ such that $G_v$ is a half-line.

\begin{figure}
	\begin{center}
		\begin{tikzpicture}[scale=0.9]
			\vertex(t1) at (-5,1) {};
			\vertex(t2) at (-3,1) {};
			\vertex(t3) at (-4,-1) {};
			\vertex(t21) at (-2,1.66) {};
			\vertex(t11) at (-6,1.66) {};
			\vertex(t31) at (-4,-2.06) {};
			\Edge (t1)(t2);
			\Edge (t2)(t3);
			\Edge(t3)(t1);
			\Edge(t2)(t21);
			\Edge(t1)(t11);
			\Edge(t3)(t31);
			\draw (-4,-2.06) -- (-4,-3.06)[dashed];
			\draw (-2,1.66) -- (-1,2.33)[dashed];
			\draw(-6,1.66) -- (-7,2.33)[dashed];
			\node at (-4,3) {(a)};

			\vertex(so) at (3,0) {};
			\vertex(s1) at (4,1) {};
			\vertex(s2) at (3,-1) {};
			\vertex(s3) at (2,1) {};
			\Edge (so)(s1);
			\Edge (so)(s2);
			\Edge (so)(s3);
			\draw (4,1) -- (5,2)[dashed];
			\draw (3,-1) -- (3,-2)[dashed];
			\draw (2,1) -- (1,2)[dashed];
			\node at (3,3) {(b)};
			
			\vertex(ro) at (-0.5,-9) {};
			\vertex(r1) at (-0.5,-6.5) {};
			\vertex(r11) at (-0.5,-5.5) {};
			\vertex(r12) at (0.5,-6.5) {};
			\vertex(r13) at (-1.5,-6.5) {};
			\vertex(r2) at (2,-9) {};
			\vertex(r21) at (2,-8) {};
			\vertex(r22) at (3,-9) {};
			\vertex(r23) at (2,-10) {};
			\vertex(r3) at (-0.5,-11.5) {};
			\vertex(r31) at (0.5,-11.5) {};
			\vertex(r32) at (-0.5,-12.5) {};
			\vertex(r33) at (-1.5,-11.5) {};
			\vertex(r4) at (-3,-9) {};
			\vertex(r41) at (-3,-10) {};
			\vertex(r42) at (-4,-9) {};
			\vertex(r43) at (-3,-8) {};
			\Edge (ro)(r1);
			\Edge(r1)(r11);
			\Edge(r1)(r12);
			\Edge(r1)(r13);
			\Edge(ro)(r2);
			\Edge(r2)(r21);
			\Edge(r2)(r22);
			\Edge(r2)(r23);
			\Edge(ro)(r3);
			\Edge(r3)(r31);
			\Edge(r3)(r32);
			\Edge(r3)(r33);
			\Edge(ro)(r4);
			\Edge(r4)(r41);
			\Edge(r4)(r42);
			\Edge(r4)(r43);
			\draw (-0.5,-5.5) -- (-0.5,-4.5)[dashed];
			\draw (0.5,-6.5) -- (1.5,-6.5) [dashed];
			\draw (-1.5,-6.5) -- (-2.5,-6.5)[dashed];
			\draw (2,-8) -- (2,-7)[dashed];
			\draw (3,-9) -- (4,-9)[dashed];
			\draw (2,-10) -- (2,-11)[dashed];
			\draw (0.5,-11.5) -- (1.5,-11.5)[dashed];
			\draw (-0.5,-12.5) -- (-0.5,-13.5)[dashed];
			\draw (-1.5,-11.5) -- (-2.5,-11.5)[dashed];
			\draw (-3,-10) -- (-3,-11)[dashed];
			\draw (-4,-9) -- (-5,-9)[dashed];
			\draw (-3,-8) -- (-3,-7)[dashed];
			\node at (-0.5,-4) {(c)};
		\end{tikzpicture}
	\end{center}
	\captionof{figure}{Three examples of star-like graphs. The dashed lines stand for copies of $\mathbb{N}$. The graph in (b) is also called a star graph, while (c) is a "trimming" of a $4$-regular tree.}
	\label{sg_fig}
\end{figure}
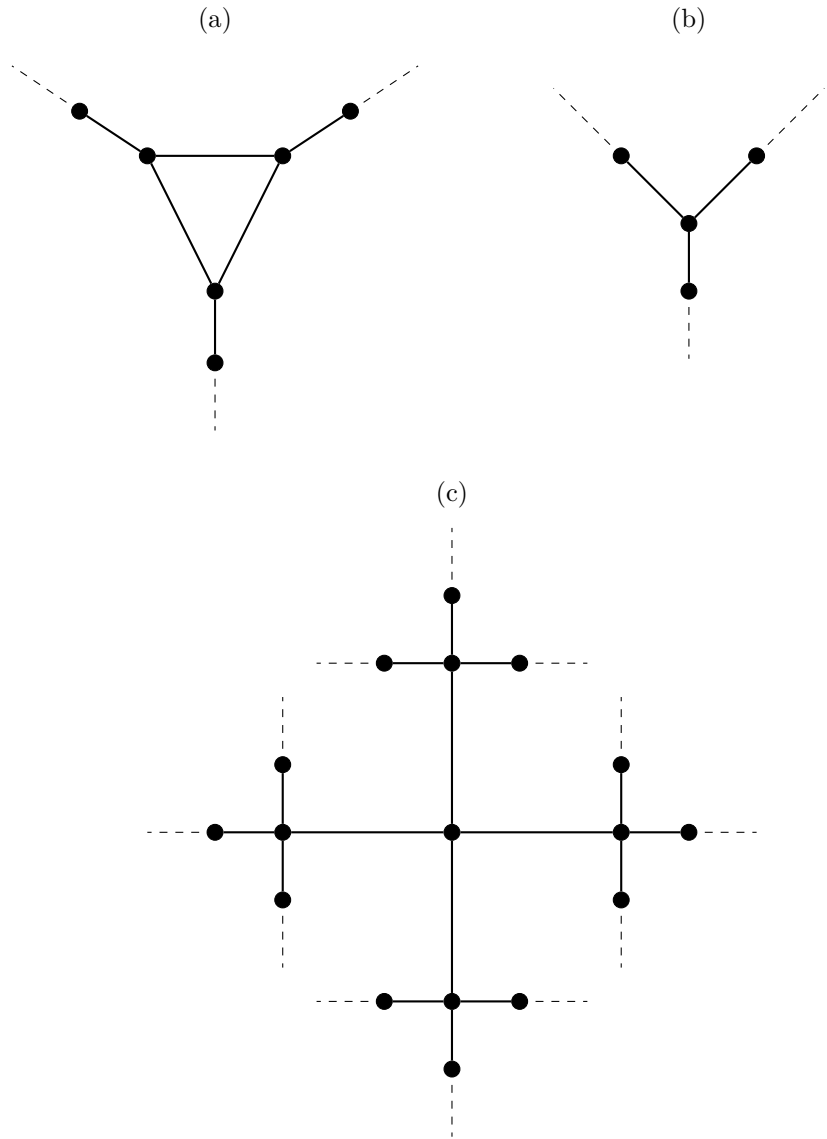
Our main result is the following
\begin{theorem}\label{main_thm}
	Let $G$ be a star-like graph, and let $J:\ell^2(G)\to\ell^2(G)$ be as in (\ref{op_eq}). Denote the spectral measure of $J$ by $P$, and let $P_s,P_{ac}$ be its singular and absolutely continuous parts $($with respect to the Lebesgue measure$)$ respectively. Then $P_s$ is supported on the set
	\begin{center}
		$S=\left\{E\in\mathbb{R}:\text{there exists a subordinate solution to (\ref{ev_eq}) on }G\right\}$
	\end{center}
	and $P_{ac}$ is supported on the set $N=\bigcup\limits_{v\in C}N_v$, where
	\begin{center}
		$N_v=\left\{E\in\mathbb{R}:\text{there exists no subordinate solution to (\ref{ev_eq}) on }\ell_v\right\}$.
	\end{center}
\end{theorem}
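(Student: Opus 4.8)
The plan is to reduce the spectral problem on $G$ to finitely many half-line problems and then invoke the half-line subordinacy theory of \cite{KP,JL1}. First I would establish that the family $\{\delta_v\}_{v\in C}$ is cyclic for $J$: the closed subspace $\mathcal{H}_0=\overline{\vspan\{J^n\delta_v : v\in C,\ n\ge 0\}}$ is $J$-invariant and contains every $\delta_v$ with $v\in C$, and running the recursion along each half-line (using that the edge weights do not vanish) shows by induction on the distance to $C$ that $\delta_u\in\mathcal{H}_0$ for every vertex $u$, so $\mathcal{H}_0=\ell^2(G)$. Consequently the maximal spectral type of $J$ is equivalent to $\mu:=\sum_{v\in C}\mu_v$ with $\mu_v=\langle\delta_v,P(\cdot)\delta_v\rangle$, and it suffices to control the supports of the scalar measures $(\mu_v)_s$ and $(\mu_v)_{ac}$. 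I would then record the Borel transform $F_v(z)=\langle\delta_v,(J-z)^{-1}\delta_v\rangle$ for $z\in\mathbb{C}^+$ together with its Schur-complement (Feshbach) representation $F_v(z)=[(J_C-z-\Sigma(z))^{-1}]_{vv}$, where $J_C$ is the finite matrix on $C$ and $\Sigma(z)=\diag(a_{e_i}^2 m_i(z))$ is the self-energy supported on the contact vertices $v_1,\dots,v_k$, with $m_i$ the Weyl $m$-function of the half-line $\ell_{v_i}$ decoupled from $C$.

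For the absolutely continuous part I would observe that $J$ is a finite-rank (rank $\le 2k$) perturbation of the decoupled operator $J_0=J_C\oplus\bigoplus_{i=1}^k J_i$. By the Kato--Rosenblum theorem the absolutely continuous parts of $J$ and $J_0$ are unitarily equivalent, hence have the same essential support; since $J_C$ is a finite matrix it carries no a.c.\ spectrum, so $P_{ac}(J)$ is supported on $\bigcup_i\operatorname{ess\,supp}(J_i)_{ac}$. Because subordinacy is an asymptotic property, insensitive to the single-site difference between $\ell_{v_i}$ and the decoupled $J_i$, the half-line theorem of \cite{KP} gives $\operatorname{ess\,supp}(J_i)_{ac}=N_{v_i}$, so that $P_{ac}(J)$ is supported on $N=\bigcup_i N_{v_i}$ (non-contact vertices contributing nothing).

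The singular part is the substantive case, and I would attack it directly through $F_v$. The measure $(\mu_v)_s$ is carried by $T_v=\{E:\lim_{\varepsilon\downarrow 0}\im F_v(E+i\varepsilon)=+\infty\}$, a Lebesgue-null set. Using $\im F_v(E+i\varepsilon)=\varepsilon\|(J-E-i\varepsilon)^{-1}\delta_v\|^2$, divergence forces $(J-E-i\varepsilon)^{-1}\delta_v$ to blow up in $\ell^2$, which in turn forces the finite matrix $A(z)=J_C-z-\Sigma(z)$ to approach a singular matrix as $z\to E+i0$; let $w_0$ be a limiting null vector. On each half-line the vector $(J-z)^{-1}\delta_v$ is a multiple of the $\ell^2$ Weyl solution, so after normalizing by $\|w_0\|$ and passing to the limit $\varepsilon\downarrow 0$ along a subsequence I would assemble a nontrivial formal solution $\Phi$ of $J\Phi=E\Phi$ with $\Phi|_C=w_0$ whose restriction to each half-line is the boundary value of the Weyl solution. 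Since $\im F_v\to\infty$ forces each $m_i(E+i0)$ to be real (finite or infinite)—precisely the regime in which the Weyl solution converges to the subordinate solution—this $\Phi$ is subordinate on every half-line, whence $E\in S$; thus $(\mu_v)_s(\mathbb{R}\setminus S)=0$, and summing over $v$ shows $P_s$ is supported on $S$.

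I expect the main obstacle to be exactly this assembly step. Two points require care. First, one must show, for $\mu_v$-a.e.\ $E\in T_v$, that the energy- and $\varepsilon$-dependent Weyl solutions on the several half-lines converge simultaneously to subordinate solutions; I would obtain this from the quantitative Jitomirskaya--Last estimates of \cite{JL1} relating $\im m_i(E+i\varepsilon)$ and $|m_i(E+i\varepsilon)|$ to the ratios $\|\cdot\|_L$ at the critical length $L(\varepsilon)$. Second, the matching has degenerate cases: channels with $\im m_i(E+i0)>0$ force $(w_0)_{v_i}=0$ (so $\Phi$ vanishes on that half-line, which is admissible), whereas channels with $m_i(E+i0)=\infty$ correspond to a Dirichlet-subordinate solution and must be treated via the reciprocal self-energy so that $A(z)^{-1}$ is analyzed without dividing by a diverging entry. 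Reconciling these cases into a single well-defined global solution that is subordinate on every half-line simultaneously is the technical heart of the argument.
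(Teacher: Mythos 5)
Your skeleton coincides with the paper's in its main structural elements: cyclicity of $\left\{\delta_v\right\}_{v\in V_C}$ is the paper's Claim \ref{cyclic_clame}; your Feshbach identity $M(z)^{-1}=J_C-z-\Sigma(z)$ is exactly Lemma \ref{Schur_lemma} in disguise, since $1/m_j(z)=b_{v_j}-z-a_j^2\widetilde{m}_j(z)$ where $\widetilde{m}_j$ is your decoupled (Dirichlet) $m$-function; and the plan of extracting a null vector of the degenerating matrix on $C$ and extending it to a solution on all of $G$ is the paper's as well. Your absolutely continuous part argument (finite-rank decoupling plus Kato--Rosenblum, then half-line theory for each half-line) is correct and self-contained; in fact the paper's written proof concentrates on $P_s$, so this piece of your proposal is, if anything, more explicit than the original.

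The gap is in the singular part, exactly at the step you flagged, and it is a missing idea rather than a routine technicality. Your compactness argument yields only \emph{subsequential} limits: some $\epsilon_n\downarrow0$ along which the normalized null vectors converge and each $m_i(E+i\epsilon_n)$ converges in $\mathbb{C}\cup\left\{\infty\right\}$. But the half-line input, Theorem \ref{gilpe_th}, identifies the subordinate solution through the boundary condition $\cot\theta=m_i(E+i0)$: it requires the \emph{full} limit to exist and be real (possibly infinite). A subsequential limit cannot be fed into it --- nothing in your argument rules out that $m_i(E+i\epsilon)$ oscillates as $\epsilon\downarrow0$, in which case no boundary condition, hence no subordinate solution, is produced. (Relatedly, your assertion that $\im F_v(E+i0)=\infty$ ``forces each $m_i(E+i0)$ to be real'' is false as stated; as you yourself note later, channels with $\im m_i(E+i0)>0$ can occur and must carry a vanishing component of the null vector.) The paper closes precisely this hole with two a.e.\ boundary-value theorems: Kac's Theorem \ref{kac_thm} localizes the divergence $\im\tr M(E+i0)=\infty$ to a diagonal entry, $\im M_{kk}(E+i0)=\infty$ on $A_k=\left\{E:\frac{d\mu_k}{d\mu}(E)>0\right\}$, and then Poltoratskii's Theorem \ref{polth}, applied to $\mu_{jk}\ll\mu_k$ on the singular set, guarantees that the ratios $\alpha_j=\lim_{\epsilon\rightarrow0}M_{jk}(E+i\epsilon)/M_{kk}(E+i\epsilon)$ have genuine limits for $\mu_s$-almost every $E$. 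Plugging these into $M(z)^{-1}=A+\diag\left(1/m_1(z),\ldots,1/m_n(z)\right)$ then gives simultaneously the null vector $\left(\alpha_j\right)_j$ and, on every channel with $\alpha_j\neq0$, the existence and realness of $\lim_{\epsilon\rightarrow0}1/m_j(E+i\epsilon)$ --- exactly the hypothesis Theorem \ref{gilpe_th} needs, channel by channel; the degenerate channels ($\alpha_j=0$, or $m_j(E+i0)=0$) are then handled inside the same limit equation. Your proposed substitute (Jitomirskaya--Last estimates at critical length) is not carried out, and it is doubtful it can be: what is needed is almost-everywhere existence of boundary limits of Herglotz-type ratios, which is the content of Poltoratskii's theorem, not something recoverable from norm estimates plus compactness. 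If you import Theorems \ref{polth} and \ref{kac_thm} at this point, your outline essentially becomes the paper's proof.
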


As noted before, when $G=\mathbb{N}$, $\delta_1$ is a cyclic vector for $J$. In particular, this implies that the spectrum is simple. This is no longer the case for when $G=\mathbb{Z}$, where it can be seen that the absolutely continuous spectrum may have multiplicity $2$. Nevertheless, Kac (\cite{Kac}) showed that the singular spectrum of Schr\"{o}dinger operators on the real line is simple. Later on, Gilbert (\cite{G1}) found a proof of this result using subordinacy theory, and Simon (\cite{Sim3}) found a proof of this fact using the theory of rank one perturbations. The local spectral multiplicity of $J$ can be described via a multiplicity function $N_J:\sigma(J)\to \mathbb{N}\cup\left\{\infty\right\}$ (see Section \ref{mul_sec} for a precise definition). The results of \cite{G1,Kac} essentially say that for $P_s$-almost every $E\in\mathbb{R}$, $N_J(E)=1$. A generalization of the above result in the continuous setting is given in \cite{SW}. They show that for a star-graph with $n$ branches, the local multiplicity is bounded by $n-1$, and give an explicit formula for $N_J$. Our second result is the following generalization: 
\begin{theorem}\label{mul_thm}
	Let $G,J,P$ be as in Theorem \ref{main_thm}. Given $E\in\mathbb{R}$, let
	\begin{center}
		$S(E)\coloneqq\left\{u:G\to\mathbb{R}:\text{u is a subordinate solution to (\ref{ev_eq}) on }G\right\}$.
	\end{center}
	Then, for $P_s$-almost every $E\in\mathbb{R}$, $N_J(E)\leq \dim S\left(E\right)$.
\end{theorem}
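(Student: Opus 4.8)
The plan is to realize $N_J(E)$ as the rank of an almost-everywhere defined matrix density and then to embed the corresponding multiplicity space into $S(E)$, using the half-line subordinacy correspondence that underlies Theorem \ref{main_thm}. Assuming, as we may, that all edge weights are nonzero (otherwise $G$ splits into components handled separately), fix the vertices $v_1,\dots,v_n\in C$ carrying half-lines and let $w_i$ be the first vertex of $G_{v_i}$. By connectedness, repeated application of $J$ to $\{\delta_{w_i}\}_{i=1}^n$ reaches all of $C$ and all of the half-lines, so $\mathcal B:=\vspan\{\delta_{w_i}\}_{i=1}^n$ is cyclic for $J$. Let $\Omega_{ij}(\cdot):=\langle\delta_{w_i},P(\cdot)\delta_{w_j}\rangle$ be the associated $n\times n$ matrix-valued spectral measure and $\rho:=\tr\Omega$ a scalar measure of maximal spectral type, so that $\rho_s$ is equivalent to the maximal singular spectral type of $J$. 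By the multiplicity theory for matrix-valued measures recalled in Section \ref{mul_sec}, for $\rho$-a.e.\ $E$,
\[
N_J(E)=\rank\frac{d\Omega}{d\rho}(E),
\]
so in particular this holds for $P_s$-almost every $E$; this already yields the crude bound $N_J\le n$.

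Next I would pass from the density to solutions of (\ref{ev_eq}). The Borel transform $M(z):=\bigl(\langle\delta_{w_i},(J-z)^{-1}\delta_{w_j}\rangle\bigr)_{ij}$ is a matrix Herglotz function which, by a Schur-complement (Feshbach) reduction across the finite set $C$, is expressible through the compressed resolvent of $J_C$ and the scalar Dirichlet Weyl functions $m_1,\dots,m_n$ of the half-lines $G_{v_1},\dots,G_{v_n}$. For $\rho$-a.e.\ $E$ the fiber of the direct integral attached to $\mathcal B$ is faithfully realized, via the generalized eigenfunction expansion, as a space of polynomially bounded solutions of (\ref{ev_eq}); concretely, a vector $\eta$ in the range of $\tfrac{d\Omega}{d\rho}(E)$ is sent to the solution whose boundary data at the junctions is prescribed by $\eta$ and which is propagated into each half-line by the Weyl (subordinate) solution there. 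I would then check that this evaluation map is injective for $\rho$-a.e.\ $E$, so that its image has dimension exactly $N_J(E)$.

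The heart of the argument is to show that, for $P_s$-a.e.\ $E$, every solution produced by this map lies in $S(E)$, that is, is subordinate on each half-line. Here I would invoke the half-line theory behind Theorem \ref{main_thm}: the restriction of the global spectral measure to the $i$-th half-line is, up to mutual absolute continuity, governed by $m_i$, and the Gilbert--Pearson/Jitomirskaya--Last dichotomy on $G_{v_i}$ forces any solution carried by the singular part to be subordinate there. On the half-lines with $E\in N_{v_i}$ (the a.c.\ regime, where no subordinate solution exists) the same input forces the restriction to vanish, which is exactly the vanishing built into the definition of $S(E)$. Combining over the finitely many half-lines embeds the fiber into $S(E)$ and yields $N_J(E)\le\dim S(E)$ for $P_s$-a.e.\ $E$.

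The step I expect to be the main obstacle is this last one: transporting the scalar subordinacy correspondence onto each half-line through the coupling at $C$, and proving that a generalized eigenfunction of the global singular part restricts to a subordinate (or vanishing) solution on every half-line simultaneously, off a set of energies of $\rho_s$-measure zero. A secondary technical point, needed for the dimension count rather than for the inequality itself, is the injectivity of the evaluation map on the fiber for $\rho$-a.e.\ $E$.
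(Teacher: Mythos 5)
Your overall strategy (realize $N_J(E)$ as the rank of a matrix Radon--Nikodym density and then turn the columns of that density into subordinate solutions) is the same as the paper's, but the proposal has a fatal flaw at its foundation: the cyclicity claim is false. The family $\left\{\delta_{w_i}\right\}_{i=1}^n$ of deltas at the first vertices of the half-lines (and likewise the family of deltas at the attachment vertices $v_i$) need not be cyclic for $J$. Applying $J$ to $\delta_{w_i}$ mixes the inward vertex $v_i$ with the next outward vertex, so the outward induction that proves Claim \ref{cyclic_clame} has no inward analogue. Concretely, let $C$ be a triangle on $u_1,u_2,u_3$ with $a_{(u_1,u_2)}=a_{(u_1,u_3)}=a_{(u_2,u_3)}=1$, $b_{u_2}=b_{u_3}=0$, and one half-line attached at $u_1$. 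Then $\psi=\delta_{u_2}-\delta_{u_3}$ satisfies $J\psi=-\psi$, and $\langle J^m\delta_{w_1},\psi\rangle=(-1)^m\psi_{w_1}=0$ for all $m$, so $\psi$ is orthogonal to the entire cyclic subspace of $\delta_{w_1}$. For this reason your measure $\rho$ is not of maximal spectral type (here $\rho(\{-1\})=0$ while $P(\{-1\})\neq 0$), the identity $N_J(E)=\rank\frac{d\Omega}{d\rho}(E)$ does not hold $P_s$-almost everywhere, and your ``crude bound'' $N_J\leq n$ is simply false: as the paper points out, solutions supported on the compact component are subordinate, and the multiplicity can exceed the number of half-lines at eigenvalues of exactly this kind. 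This is why the paper works with the full family $\left\{\delta_v:v\in V_C\right\}$ of Claim \ref{cyclic_clame}, whose cardinality is $\#V_C$, not the number of half-lines.

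The second problem is that the step you yourself flag as ``the heart of the argument'' --- showing that, off a $P_s$-null set, the fiber data actually produces subordinate (or vanishing) restrictions on every half-line simultaneously --- is precisely the content that needs a proof, and the proposal does not supply a mechanism for it. The paper closes this gap as follows: on the set $A_k=\left\{E:\frac{d\mu_k}{d\mu}(E)>0\right\}$, Poltoratskii's theorem (Theorem \ref{polth}) gives, for $\mu_s$-a.e.\ $E$,
\begin{equation*}
\left(\omega(E)\right)_{lj}=\frac{d\mu_k}{d\mu}(E)\cdot\lim_{\epsilon\rightarrow 0}\frac{M_{lj}(E+i\epsilon)}{M_{kk}(E+i\epsilon)},
\end{equation*}
so each column of $\omega(E)$ is, up to the positive scalar $\frac{d\mu_k}{d\mu}(E)$, a vector of boundary limits of resolvent ratios; the proof of Theorem \ref{main_thm} (via the Schur-type formula of Lemma \ref{Schur_lemma}, the half-line dichotomy of Theorem \ref{gilpe_th}, and Remark \ref{help_rem}) shows that every nonzero such column extends to a subordinate solution of (\ref{ev_eq}), with the column entries as its values on $V_C$. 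Linear independence of $\rank\omega(E)=N_J(E)$ columns then immediately gives that many linearly independent subordinate solutions, so no separate injectivity argument for a generalized-eigenfunction evaluation map is required. Without these two repairs --- the correct cyclic family and the Poltoratskii/Schur mechanism --- the proposal does not constitute a proof.
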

\begin{remark}
	It is not hard to show that on each half-line, if a subordinate solution exists then it is unique, which implies that $S\left(E\right)$ is a finite-dimensional space and so $\dim S\left(E\right)$ is well defined.
\end{remark}
When $G=\mathbb{Z}$, it can be seen that $\dim S(E)\leq 1$ for any $E\in\mathbb{R}$, and so simplicity of the singular spectrum in that case is given immediately by Theorem \ref{mul_thm}. With very little additional work, some of the results in \cite{SW} can also be obtained in the discrete setting. In general, the inequality cannot be replaced with an equality. We will present an example in which the inequality is strict. Note that by our definition, a solution which is only supported on the compact component is also subordinate and so in general, the multiplicity may exceed the number of half-lines attached to $C$. However, we will show that in the purely singular continuous part of the spectrum this is not possible. Specifically, we show that for $P_s$-almost every $E\in\sigma_s\left(J\right)\setminus\sigma_{pp}\left(J\right)$, $N_J\left(E\right)$ is bounded by $k$, where $k$ is the number of half-lines attached to $G$. We believe that the bound can be improved to $k-1$, as this is the case for star-graphs, as shown in \cite{SW} for the continuous setting, and in this work for the discrete one. However, our attempts to prove this bound did not succeed.

The rest of the paper is structured as follows. In Section 2, we give some basic measure-theoretic background, present the one-dimensional theory, and introduce the notion of star-like graphs. In Sections 3 and 4, we give proofs of theorems \ref{main_thm} and \ref{mul_thm} respectively. Section 5 includes some remarks, examples and applications.

{\bf Acknowledgments} I would like to thank my advisor Prof.\ Jonathan Breuer for his valuable guidance throughout this work. I would also like to thank Prof.\ Barry Simon for useful discussions.
\section{Preliminaries}
We begin by introducing relevant definitions and results regarding the boundary values of Borel transforms of measures.
\subsection{Boundary Behavior of Borel Transforms}
Throughout this work we deal with finite complex-valued Borel measures on $\mathbb{R}$. Given such a measure $\mu$, its Borel transform is defined by
\begin{center}
	$m_\mu(z)=\int_\mathbb{R}\frac{d\mu(x)}{x-z}$.
\end{center}
It is an analytic function defined on $\mathbb{C}_+\coloneqq\left\{z\in\mathbb{C}:\im{z}>0\right\}$. As we are interested in the boundary values of such functions, given any analytic function $F:\mathbb{C}_+\to\mathbb{C}_+$ and $E\in\mathbb{R}$, we denote $F(E+i0)\coloneqq\underset{\epsilon\rightarrow0}{\lim}F(E+i\epsilon)$ whenever the limit exists. In the case that $\mu$ is a positive measure which satisfies
\begin{equation}\label{measure_cond}
	\int_\mathbb{R}\frac{d\mu(x)}{|x|+1}<\infty,
\end{equation}
various continuity properties of $\mu$ with respect to the Lebesgue measure, which we denote throughout by $\lambda$, are related to the boundary values of its Borel transform. In particular, we will use the following well known theorem (see, for example, \cite{Sim})
\begin{theorem}\label{supp_thm}
	Let $\mu$ be a positive measure satisfying (\ref{measure_cond}). Denote by $\mu_{ac},\mu_s$ the absolutely continuous and singular parts of $\mu$ (with respect to the Lebesgue measure) respectively. Then
	\begin{enumerate}
		\item $\mu_{ac}$ is supported on the set $\left\{E\in\mathbb{R}:0<\im m_\mu(E+i0)<\infty\right\}$.
		
		\item $\mu_s$ is supported on the set $\left\{E\in\mathbb{R}:\im m_\mu(E+i0)=\infty\right\}$.
	\end{enumerate}
\end{theorem}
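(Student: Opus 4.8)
The plan is to identify $\im m_\mu$ with (a multiple of) the Poisson integral of $\mu$ and then to import the classical differentiation theory of measures. First I would record that $m_\mu$ is a Herglotz function and that for every $E\in\mathbb{R}$ and $\epsilon>0$,
\[
\tfrac{1}{\pi}\im m_\mu(E+i\epsilon)=\int_\mathbb{R}\frac{1}{\pi}\frac{\epsilon}{(E-t)^2+\epsilon^2}\,d\mu(t)=(P_\epsilon*\mu)(E),
\]
where $P_\epsilon$ denotes the Poisson kernel; condition (\ref{measure_cond}) makes $\mu$ locally finite and guarantees that this integral converges for every $\epsilon>0$. Writing the Lebesgue decomposition $d\mu=f\,d\lambda+d\mu_s$ with $f=d\mu_{ac}/d\lambda\ge 0$, the theorem reduces entirely to understanding the $\epsilon\to 0$ behaviour of $(P_\epsilon*\mu)(E)$, which I would control through the symmetric derivative $(D\mu)(E)=\lim_{\delta\to 0^+}\mu((E-\delta,E+\delta))/(2\delta)$.

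The key external input is the de la Vall\'ee Poussin differentiation theorem: $(D\mu)(E)=f(E)$ for $\lambda$-almost every $E$, while $(D\mu)(E)=+\infty$ for $\mu_s$-almost every $E$. Part (2) then follows from an elementary kernel estimate. For $|E-t|<\epsilon$ one has $\epsilon/((E-t)^2+\epsilon^2)\ge 1/(2\epsilon)$, so discarding the rest of the integrand gives
\[
\im m_\mu(E+i\epsilon)\ \ge\ \frac{\mu((E-\epsilon,E+\epsilon))}{2\epsilon}.
\]
Letting $\epsilon\to 0$ along the $\mu_s$-full set where $(D\mu)(E)=+\infty$ forces $\im m_\mu(E+i0)=+\infty$, so $\mu_s$ is supported on $\{E:\im m_\mu(E+i0)=\infty\}$.

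For part (1), I would prove that the Poisson integral converges to $f(E)$ at $\lambda$-almost every $E$, by splitting the integral along $d\mu=f\,d\lambda+d\mu_s$. On the absolutely continuous piece, $P_\epsilon*(f\,d\lambda)\to f(E)$ at every Lebesgue point of $f$, hence $\lambda$-a.e. On the singular piece, $P_\epsilon*\mu_s\to 0$ for $\lambda$-almost every $E$, since $\mu_s\perp\lambda$. Combining these, $\tfrac{1}{\pi}\im m_\mu(E+i0)=f(E)$ for $\lambda$-a.e. $E$; in particular the boundary value is finite a.e., and on the set $\{f>0\}$, which carries $\mu_{ac}$, it is strictly positive. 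This places $\mu_{ac}$ on $\{E:0<\im m_\mu(E+i0)<\infty\}$.

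The analytic heart, and the step I expect to require the most care, is the pair of almost-everywhere convergence statements for the Poisson integral: convergence to the Lebesgue density on the absolutely continuous part and vanishing $\lambda$-a.e.\ on the singular part. Both are standard consequences of the weak-type $(1,1)$ bound for the Hardy--Littlewood maximal function (equivalently, of de la Vall\'ee Poussin's theorem), so rather than reprove them I would invoke them from \cite{Sim}; everything else in the argument is either the elementary Poisson-kernel inequality above or bookkeeping with the Lebesgue decomposition.
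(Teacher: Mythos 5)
Your proof is correct, but there is nothing in the paper to compare it against: Theorem \ref{supp_thm} is quoted there as a well-known background fact, with a pointer to \cite{Sim} and no proof given. The argument you propose is essentially the standard one from that literature, namely the identification $\frac{1}{\pi}\im m_\mu(E+i\epsilon)=(P_\epsilon*\mu)(E)$ with the Poisson kernel, combined with the classical differentiation theory of measures. Both of your ingredients are sound: the kernel lower bound $\frac{\epsilon}{(E-t)^2+\epsilon^2}\ge\frac{1}{2\epsilon}$ on $\left\{t:|E-t|<\epsilon\right\}$ reduces part (2) to the de la Vall\'ee Poussin statement that the symmetric derivative of $\mu$ is $+\infty$ at $\mu_s$-almost every point (and since this gives $\liminf_{\epsilon\to0}\im m_\mu(E+i\epsilon)=\infty$, the boundary value exists and equals $\infty$ in the sense the paper uses); part (1) follows from Poisson-integral convergence to the density $f$ at Lebesgue points together with $P_\epsilon*\mu_s\to 0$ at $\lambda$-almost every point, plus the observation that $\mu_{ac}$ is carried by $\left\{f>0\right\}$ and annihilates $\lambda$-null sets. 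The one detail worth spelling out is the tail control in part (1): $f$ is only locally integrable, so convergence at a Lebesgue point requires splitting the integral at $|t-E|\ge 1$, where the kernel is $O\left(\epsilon/(E-t)^2\right)$ and the tail vanishes as $\epsilon\to 0$ precisely because of (\ref{measure_cond}); with that remark your argument is a complete, self-contained substitute for the citation.
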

The second type of results that we will need concern the boundary behavior of ratios of Borel transforms. Specifically, given two Borel measures $\mu,\sigma$, let $\frac{d\mu}{d\sigma}(E)\coloneqq\underset{\epsilon\rightarrow0}{\lim}\frac{\mu(E-\epsilon,E+\epsilon)}{\sigma(E-\epsilon,E+\epsilon)}$ whenever it exists in $\mathbb{C}\cup\left\{\infty\right\}$. Note that if $\mu\ll\sigma$, then $\frac{d\mu}{d\sigma}$ coincides with the Radon-Nikodym derivative of $\mu$ w.r.t.\ $\sigma$ (as $L^1(\sigma)$ functions).  We will use the following version of Poltoratskii's theorem:
\begin{theorem}\emph{(\cite{JaL, Pol})}\label{polth}
	Let $\sigma,\mu$ be complex-valued Borel measures on $\mathbb{R}$ such that $\mu\ll\sigma$. Denote by $\sigma_s$ the part of $\sigma$ which is singular w.r.t.\ the Lebesgue measure. Then, for $\sigma_s$-almost every $E\in\mathbb{R}$, the limit $\underset{\epsilon\rightarrow0}{\lim}\frac{m_\mu(E+i\epsilon)}{m_\sigma(E+i\epsilon)}$ exists and is equal to $\frac{d\mu}{d\sigma}(E)$.
\end{theorem}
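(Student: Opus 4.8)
This is the classical Poltoratskii theorem, so the plan is to strip it down to one genuinely hard estimate. I would first put $\tau=\abs{\sigma}$, write $h=\frac{d\sigma}{d\tau}$ (so $\abs{h}=1$ $\tau$-a.e.) and $p=\frac{d\mu}{d\tau}$, giving $m_\sigma=m_{h\tau}$ and $m_\mu=m_{p\tau}$. Since $\sigma_s$ and $\tau_s$ have the same null sets and
\[ \frac{m_\mu(z)}{m_\sigma(z)}=\frac{m_{p\tau}(z)/m_\tau(z)}{m_{h\tau}(z)/m_\tau(z)}, \]
with $h(E)\neq 0$, it suffices to prove the following for a \emph{positive} finite measure $\sigma$ and any $q\in L^1(\sigma)$: for $\sigma_s$-a.e.\ $E$, $\;m_{q\sigma}(E+i\epsilon)/m_\sigma(E+i\epsilon)\to q(E)$ as $\epsilon\to0$. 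Applying this to $q=p$ and $q=h$ returns the ratio $p(E)/h(E)$, which equals $\frac{d\mu}{d\sigma}(E)$ by the differentiation theorem, matching the asserted value.

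\textbf{The denominator blows up.} Setting $g=q-q(E)$ and using $m_{q\sigma}(z)-q(E)m_\sigma(z)=m_{g\sigma}(z)$, the goal reduces to $m_{g\sigma}(E+i\epsilon)=o\bigl(m_\sigma(E+i\epsilon)\bigr)$. I would next record that $\frac1\pi\im m_\sigma(E+i\epsilon)=(P_\epsilon*\sigma)(E)$ is the Poisson integral of $\sigma$, which tends to $+\infty$ precisely where the symmetric derivative of $\sigma$ is infinite; by the differentiation theorem this set carries $\sigma_s$. This is essentially part (2) of Theorem \ref{supp_thm}, and it yields $\abs{m_\sigma(E+i\epsilon)}\to\infty$ for $\sigma_s$-a.e.\ $E$, so the denominator explodes.

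\textbf{The numerator, and the main obstacle.} Fix an $E$ that is simultaneously a blow-up point of the denominator and a point of approximate continuity of $q$ with respect to $\sigma$, i.e.\ $\frac{1}{S(t)}\int_{\abs{x-E}<t}\abs{g}\,d\sigma\to0$ as $t\to0$, where $S(t)=\sigma(E-t,E+t)$; the differentiation theorem provides $\sigma$-a.e.\ such $E$. Bounding $\abs{m_{g\sigma}(E+i\epsilon)}\le\int\abs{g(x)}\bigl((x-E)^2+\epsilon^2\bigr)^{-1/2}\,d\sigma(x)$ and splitting at $\abs{x-E}=\delta$, the far part is $O(1/\delta)$ and is annihilated by the exploding denominator. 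The near part is the crux: it must be shown to be $o\bigl(\im m_\sigma(E+i\epsilon)\bigr)$. The obstacle is sharp and real — the Cauchy kernel $(t^2+\epsilon^2)^{-1/2}$ has strictly heavier tails than the Poisson kernel $\epsilon(t^2+\epsilon^2)^{-1}$ (their ratio grows like $t/\epsilon$ for $t\gg\epsilon$), so no pointwise kernel domination holds, and the mere smallness of the local $\sigma$-averages of $\abs{g}$ does not close the estimate on its own.

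\textbf{Route around the obstacle.} To overcome this I would not attempt a bare harmonic-analytic comparison but instead follow the rank-one perturbation and spectral-averaging circle of ideas of Jak\v{s}i\'c--Last: integrate the desired identity against a coupling parameter, use the disintegration of Lebesgue measure into the associated family of (Aleksandrov--Clark) spectral measures, and thereby reduce the delicate pointwise near-diagonal bound to an estimate that holds after averaging and can then be upgraded to hold $\sigma_s$-almost everywhere. Once the case of bounded (or simple) $q$ is settled, the passage to general $q\in L^1(\sigma)$ is routine, via approximation together with a weak-type maximal bound for $\sup_\epsilon\abs{m_{q\sigma}(E+i\epsilon)}/\im m_\sigma(E+i\epsilon)$.
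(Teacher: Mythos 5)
The paper does not prove this statement at all: it is Poltoratskii's theorem, imported wholesale from \cite{Pol} and \cite{JaL}, so there is no internal proof to compare yours against; the only question is whether your sketch stands on its own. Its first three steps do. The reduction via $\tau=|\sigma|$, $h=\frac{d\sigma}{d\tau}$, $p=\frac{d\mu}{d\tau}$ to a positive base measure with a single $L^1$ density is the standard one and is correct (including the identification of the limiting value with the symmetric derivative $\frac{d\mu}{d\sigma}(E)$ via the Besicovitch differentiation theorem); the blow-up of the denominator on a carrier of $\sigma_s$ is the paper's Theorem \ref{supp_thm}(2); and your diagnosis of the obstruction --- the Cauchy kernel has strictly heavier tails than the Poisson kernel, so approximate continuity of $q$ at $E$ plus blow-up of the Poisson integral cannot by themselves close the near-diagonal estimate --- is exactly right, and is the reason the statement is genuinely a theorem about the singular part (on the absolutely continuous part it fails in general, because nonlocal Hilbert-transform contributions survive in the boundary values).

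The gap is the fourth step, which is a citation in disguise: the entire analytic content of the theorem lives in what you describe as "an estimate that holds after averaging and can then be upgraded to hold $\sigma_s$-almost everywhere." No mechanism for this upgrade is given, and none can be extracted from averaging alone: spectral averaging --- the disintegration of Lebesgue measure into the Clark family $\{\sigma_\lambda\}$ --- produces statements valid for Lebesgue-a.e.\ energy or Lebesgue-a.e.\ coupling constant, and singular measures are precisely the measures such averages cannot see. What actually closes the argument in \cite{JaL} is structural rather than an averaging-plus-upgrade scheme: the Krein resolvent formula expresses the ratio $m_\mu/m_\sigma$ through resolvents of the rank-one perturbations, and the Aronszajn--Donoghue theory (the singular parts of the $\sigma_\lambda$ are mutually singular, with $(\sigma_\lambda)_s$ carried by the set where $m_\sigma(E+i0)=-1/\lambda$) converts the blow-up of the denominator into exact boundary data for the perturbed resolvent, after which one still needs an a.e.\ boundary-value existence statement relative to the perturbed singular measures. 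None of this machinery appears in your sketch, and the phrase "upgraded to hold $\sigma_s$-a.e." conceals exactly the step where a proof could fail. Likewise, the final extension from bounded to general $q\in L^1(\sigma)$ is not routine: the weak-type maximal bound for $\sup_\epsilon\lvert m_{q\sigma}/m_\sigma\rvert$ relative to $\sigma_s$ that you invoke is itself a theorem of Poltoratskii of essentially the same depth as the result being proved. In sum: correct reduction, accurate map of where the difficulty sits, but the difficulty itself is deferred to the literature rather than resolved --- which, to be fair, is the same move the paper makes by citing \cite{JaL, Pol}.
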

We will also need
\begin{theorem}\emph{(\cite{Kac})}\label{kac_thm}
	Let $\mu$ be a real measure and $\sigma$ be a probability measure, and let $E\in\mathbb{R}$. Assume that $\frac{d\mu}{d\sigma}(E)$ exists in $\mathbb{R}$, and that $\frac{d\sigma}{d\lambda}(E)$ exists, possibly equal to infinity. Then
	\begin{center}
		$\underset{\epsilon\rightarrow0}{\lim}\frac{\im(m_\mu(E+i\epsilon))}{\im(m_\sigma(E+i\epsilon))}=\frac{d\mu}{d\sigma}(E)$.
	\end{center}
\end{theorem}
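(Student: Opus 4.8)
The plan is to write both imaginary parts as Poisson integrals and to view the ratio as an average of the interval quotients $\mu((E-\delta,E+\delta))/\sigma((E-\delta,E+\delta))$ against a kernel that concentrates at $\delta=0$. Set $c:=\frac{d\mu}{d\sigma}(E)$ and recall that for any finite signed measure $\rho$,
\[\im m_\rho(E+i\epsilon)=\int_{\mathbb{R}}\frac{\epsilon}{(x-E)^2+\epsilon^2}\,d\rho(x).\]
Because the Poisson kernel is even in $x-E$, integrating by parts in the symmetric distribution function $R(\delta):=\rho((E-\delta,E+\delta))$ yields
\[\im m_\rho(E+i\epsilon)=\int_0^\infty R(\delta)\,K_\epsilon(\delta)\,d\delta,\qquad K_\epsilon(\delta):=\frac{2\epsilon\delta}{(\delta^2+\epsilon^2)^2}.\]
A possible atom at $E$ forces $\frac{d\sigma}{d\lambda}(E)=\infty$ and produces the same factor $c$ on both sides, so I may assume $\sigma(\{E\})=0$ and discard the boundary term. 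I will use the elementary facts $\int_0^\infty\delta K_\epsilon(\delta)\,d\delta=\pi/2$ and $\int_\eta^\infty K_\epsilon(\delta)\,d\delta=\epsilon/(\eta^2+\epsilon^2)$, together with $\int_0^\eta\delta K_\epsilon(\delta)\,d\delta\to\pi/2$ as $\epsilon\to0$ for each fixed $\eta$.

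Writing $S(\delta):=\sigma((E-\delta,E+\delta))$, the hypothesis $\frac{d\sigma}{d\lambda}(E)=L\in(0,\infty]$ is used only through the denominator. If $L<\infty$ then $S(\delta)=2L\delta(1+o(1))$, which together with the identities above gives $\im m_\sigma(E+i\epsilon)\to\pi L>0$; if $L=\infty$ the same representation and the bound $S(\delta)\ge A\delta$ (valid near $0$ for every $A$) give $\im m_\sigma(E+i\epsilon)\to\infty$. In both cases $\epsilon/\im m_\sigma(E+i\epsilon)\to0$, which is the only property of $\sigma$ the argument needs.

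For the main estimate put $M(\delta):=\mu((E-\delta,E+\delta))$ and fix $\rho>0$. Since $M(\delta)/S(\delta)\to c$ there is $\eta>0$ with $\lvert M(\delta)-cS(\delta)\rvert\le\rho S(\delta)$ for $\delta\le\eta$. Splitting
\[\im m_\mu(E+i\epsilon)-c\,\im m_\sigma(E+i\epsilon)=\int_0^\eta (M-cS)K_\epsilon\,d\delta+\int_\eta^\infty (M-cS)K_\epsilon\,d\delta,\]
I bound the first integral by $\rho\int_0^\eta S K_\epsilon\,d\delta\le\rho\,\im m_\sigma(E+i\epsilon)$, and the second, using $\lvert M-cS\rvert\le\lVert\mu\rVert+\lvert c\rvert$ (with $\lVert\mu\rVert$ the total variation of $\mu$), by $(\lVert\mu\rVert+\lvert c\rvert)\,\epsilon/(\eta^2+\epsilon^2)$. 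Dividing by $\im m_\sigma(E+i\epsilon)$ and letting $\epsilon\to0$ with $\eta$ fixed, the far term disappears by the previous paragraph, so $\limsup_{\epsilon\to0}\lvert \im m_\mu/\im m_\sigma-c\rvert\le\rho$; as $\rho$ was arbitrary this proves the claim.

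The crux is this last step: the far field of $\mu$ contributes an error of size $O(\epsilon)$, and to discard it one must know that $\im m_\sigma(E+i\epsilon)$ decays strictly more slowly than $\epsilon$. This is exactly what the assumption on $\frac{d\sigma}{d\lambda}(E)$ guarantees, since a positive (respectively infinite) symmetric density forces $\im m_\sigma(E+i\epsilon)$ to stay bounded below (respectively to blow up); if instead $\frac{d\sigma}{d\lambda}(E)=0$ the tail need not be negligible, so the positivity of the density is essential. In the application to Theorem \ref{main_thm} the relevant energies lie in the support of the singular part, where $\frac{d\sigma}{d\lambda}(E)=\infty$, so it is the case $L=\infty$ that matters. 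The only remaining routine points are the justification of the Stieltjes integration by parts for the signed measure $\mu$ and the already-noted harmless contribution of an atom at $E$.
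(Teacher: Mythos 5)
The paper never proves this theorem: it is imported as a black box from \cite{Kac} (cf.\ also \cite{Sim3}), so there is no internal argument to compare yours against; your proof must stand on its own, and it essentially does. The representation $\im m_\rho(E+i\epsilon)=\int_0^\infty R(\delta)K_\epsilon(\delta)\,d\delta$ is most cleanly justified not by Stieltjes integration by parts but by writing $\frac{\epsilon}{(x-E)^2+\epsilon^2}=\int_{|x-E|}^\infty K_\epsilon(\delta)\,d\delta$ and applying Fubini (legitimate because $\int_0^\infty K_\epsilon(\delta)\,d\delta=1/\epsilon<\infty$ and $|\mu|$ is finite); this simultaneously handles the signed measure, the boundary term, and a possible atom at $E$, so your two ``routine remaining points'' genuinely are routine. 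The kernel identities, the atom case, the asymptotics $\im m_\sigma(E+i\epsilon)\to\pi L$ for $L\in(0,\infty)$ and $\im m_\sigma(E+i\epsilon)\to\infty$ for $L=\infty$, the $\eta$-splitting, and the tail bound $(\lVert\mu\rVert+|c|)\,\epsilon/(\eta^2+\epsilon^2)$ are all correct, and you are right that everything reduces to the single fact $\epsilon/\im m_\sigma(E+i\epsilon)\to 0$.

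Your closing caveat is sharper than you may realize: the statement as transcribed in the paper, which permits $\frac{d\sigma}{d\lambda}(E)=0$, is actually \emph{false} in that case, so restricting to $\frac{d\sigma}{d\lambda}(E)\in(0,\infty]$ is not a gap in your argument but a necessary correction of the hypothesis. Concretely, take $E=0$, $\mu=\delta_1$ (unit point mass at $x=1$), and $\sigma$ the symmetric probability measure on $[-1,1]$ determined by $\sigma\left((-\delta,\delta)\right)=e^{1-1/\delta}$. Then $\frac{d\mu}{d\sigma}(0)=0$ and $\frac{d\sigma}{d\lambda}(0)=0$ both exist, yet $\im m_\mu(i\epsilon)=\frac{\epsilon}{1+\epsilon^2}$ while $\im m_\sigma(i\epsilon)\le 4\epsilon+\frac{\epsilon}{1+\epsilon^2}$ (the near field is killed by $K_\epsilon(\delta)\le 2\epsilon/\delta^3$ and $\int_0^1 e^{-1/\delta}\delta^{-3}\,d\delta=2/e$), so the ratio has $\liminf$ at least $1/5$ rather than limit $0$: exactly the far-field domination you warned about. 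None of this affects the paper, since in the proof of Theorem \ref{main_thm} the result is invoked only on the singular support, where $\frac{d\mu}{d\lambda}(E)=\infty$ for $\mu_s$-a.e.\ $E$, i.e.\ in the case $L=\infty$ that your proof covers.
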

If $\mu$ is a probability measure on $\mathbb{R}$, then as mentioned before, its Borel transform $m_\mu$ maps $\mathbb{C}_+\coloneqq\left\{z\in\mathbb{C}:\im z>0\right\}$ to itself analytically. Such functions are called Herglotz (sometimes Nevanlinna or Pick) functions. This correspondence also works in the other way (see e.g.\ \cite{Sim2}), in the following sense: if $m(z):\mathbb{C}_+\to\mathbb{C}_+$ is analytic and satisfies
\begin{center}
	$m(z)=z^{-1}+O(z^{-2})$,
\end{center}
then $m$ arises as the Borel transform of some probability measure $\mu$. In this work, we will use the following generalization of the connections mentioned above:
\begin{theorem}\emph{(\cite{GT})}\label{supp_thm_m}
	Let \mbox{$M:\mathbb{C}_+\to M_n(\mathbb{C})$} be an analytic matrix-valued function such that $\im M(z)$ is positive semi-definite for all $z\in\mathbb{C}_+$. Then
	\begin{enumerate}
		\item there exists a matrix-valued measure $\Omega$ such that
		\begin{center}
			$M(z)=C+Dz+\int_\mathbb{R}\frac{1+xz}{x-z}d\Omega(x)$
		\end{center}
		where $C$ is a self-adjoint matrix, and $D$ is positive definite.
		\item The singular part of $\Omega$ with respect to $\lambda$ is supported on the set $\left\{E\in\mathbb{R}:\im\tr M(E+i0)=\infty\right\}$.
	\end{enumerate}
\end{theorem}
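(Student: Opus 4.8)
The plan is to reduce everything to the classical scalar Herglotz representation (the case $n=1$ of the statement, recalled in the paragraph preceding Theorem \ref{supp_thm_m}) by passing to sesquilinear forms and polarizing. For a fixed vector $v\in\mathbb{C}^n$ set $m_v(z)\coloneqq\langle v,M(z)v\rangle$. Since $\im M(z)$ is positive semi-definite we have $\im m_v(z)=\langle v,\im M(z)v\rangle\ge 0$, so each $m_v$ is a scalar Herglotz function (or a real constant), and the scalar representation provides real numbers $c_v,d_v$ with $d_v\ge0$ and a finite positive measure $\Omega_v$ with
\[
m_v(z)=c_v+d_vz+\int_{\mathbb{R}}\frac{1+xz}{x-z}\,d\Omega_v(x).
\]
By the polarization identity every matrix entry $M(z)_{ij}=\langle e_i,M(z)e_j\rangle$ is a fixed finite linear combination of functions of the form $m_v$, so defining $C,D$ and the matrix-valued measure $\Omega$ by the very same linear combinations of the scalars $c_v,d_v$ and measures $\Omega_v$ produces the asserted representation entrywise. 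To verify the positivity claims I would invoke uniqueness of the scalar representation: for each $v$, the triple $\big(\langle v,Cv\rangle,\langle v,Dv\rangle,\langle v,\Omega(\cdot)v\rangle\big)$ represents $m_v$, hence must coincide with $(c_v,d_v,\Omega_v)$. Thus $\langle v,\Omega(A)v\rangle=\Omega_v(A)\ge0$ for every Borel set $A$ (so $\Omega(A)$ is positive semi-definite), $\langle v,Cv\rangle=c_v\in\mathbb{R}$ for all $v$ (so $C=C^{*}$), and $\langle v,Dv\rangle=d_v\ge0$ (so $D$ is positive semi-definite). This establishes part 1.

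For part 2 I would work with the trace $\tr M(z)=\sum_{i=1}^n m_{e_i}(z)$, which is again a scalar Herglotz function whose representing measure is $\tr\Omega\coloneqq\sum_i\Omega_{ii}$ and whose linear coefficient is $\tr D\ge0$. A direct computation gives the key identity $\im\frac{1+xz}{x-z}=\frac{(1+x^2)\im z}{|x-z|^2}$, so for $z=E+i\epsilon$,
\[
\im\tr M(E+i\epsilon)=\epsilon\,\tr D+\int_{\mathbb{R}}\frac{(1+x^2)\,\epsilon}{(x-E)^2+\epsilon^2}\,d(\tr\Omega)(x).
\]
Introducing the positive measure $d\nu\coloneqq(1+x^2)\,d(\tr\Omega)$, the integral is exactly $\im m_\nu(E+i\epsilon)$, and since the boundary term $\epsilon\,\tr D$ vanishes as $\epsilon\to0$, the sets $\{E:\im\tr M(E+i0)=\infty\}$ and $\{E:\im m_\nu(E+i0)=\infty\}$ coincide. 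Applying Theorem \ref{supp_thm} to $\nu$ then places $\nu_s$ on this set.

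It remains to transfer the conclusion from $\nu_s$ to the singular part of the matrix measure $\Omega$. Since $1+x^2>0$, the measures $\nu$ and $\tr\Omega$ have the same singular (with respect to $\lambda$) support. Moreover, for each Borel set $A$ the matrix $\Omega(A)$ is positive semi-definite, so by Cauchy--Schwarz $|\Omega_{ij}(A)|\le\Omega_{ii}(A)^{1/2}\Omega_{jj}(A)^{1/2}\le\tr\Omega(A)$; hence every entry of $\Omega$ is absolutely continuous with respect to $\tr\Omega$, i.e.\ $\Omega\ll\tr\Omega$, and the Lebesgue decomposition of $\Omega$ is governed by that of $\tr\Omega$. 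Consequently $\Omega_s$ is supported on the same set as $(\tr\Omega)_s$, namely $\{E:\im\tr M(E+i0)=\infty\}$, which is the assertion.

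Two points deserve care. The main conceptual step is the polarization/uniqueness bookkeeping that upgrades the collection of scalar measures $\{\Omega_v\}$ to a genuine positive-semi-definite matrix-valued measure; here the uniqueness of the scalar Herglotz representation is what makes the construction consistent and forces the positivity. The main technical nuisance is that $\nu=(1+x^2)\tr\Omega$ need not satisfy the growth condition (\ref{measure_cond}), so Theorem \ref{supp_thm} does not apply to $\nu$ directly. Because the support of a singular part is a local, Lebesgue-null notion, I would localize: for each $R>0$ replace $\nu$ by $\mathbbm{1}_{[-R,R]}\nu$, which is finite and hence satisfies (\ref{measure_cond}); its Borel transform differs from $m_\nu$ by the transform of $\mathbbm{1}_{|x|>R}\nu$, a function that is analytic with finite imaginary boundary values on $(-R,R)$, so the blow-up set is unchanged there. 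Running $R\to\infty$ then yields the support statement on all of $\mathbb{R}$.
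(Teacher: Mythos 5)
The paper itself gives no proof of this theorem --- it is quoted verbatim from \cite{GT} --- so your proposal can only be compared with the standard argument in the literature, whose architecture you have in fact reproduced: scalar Herglotz representations for the quadratic forms $m_v(z)=\langle v,M(z)v\rangle$, polarization to assemble $C$, $D$, $\Omega$, and, for part 2, passage to the trace together with the transfer $\Omega\ll\tr\Omega$. Your part 2 is correct, including the two points that are usually glossed over: the identity $\im\frac{1+xz}{x-z}=\frac{(1+x^2)\im z}{|x-z|^2}$, and the localization needed because $\nu=(1+x^2)\,d(\tr\Omega)$ may violate (\ref{measure_cond}); the Cauchy--Schwarz bound $|\Omega_{ij}(A)|\le\Omega_{ii}(A)^{1/2}\Omega_{jj}(A)^{1/2}$ does yield $\Omega\ll\tr\Omega$, so the singular supports line up as you claim. (What you prove about $D$ is positive semi-definiteness, which is all that is true; the word ``definite'' in the statement is an imprecision of the paper.)

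The gap is in the positivity step of part 1. The uniqueness theorem for the scalar representation holds within the class of triples (real constant, nonnegative slope, finite \emph{positive} measure). Your polarized objects $\langle v,Cv\rangle$, $\langle v,Dv\rangle$, $\langle v,\Omega(\cdot)v\rangle$ are a priori complex numbers and a complex measure, and in that larger class uniqueness is \emph{false}: take $c=d=0$ and the finite complex measure $d\omega(x)=(x-i)^{-2}dx$; writing $\frac{1+xz}{x-z}=\frac{1+x^2}{x-z}-x$ and computing the two (conditionally convergent) pieces gives $\int_{\mathbb{R}}\frac{1+xz}{x-z}\,d\omega(x)=i\pi-i\pi=0$ for every $z\in\mathbb{C}_+$, so the zero function admits a nonzero complex representation. (This is the F.\ and M.\ Riesz phenomenon: $(x-i)^{-2}$ is the boundary value of an $H^1$ function on the lower half-plane.) Hence ``the triple represents $m_v$, so it must coincide with $(c_v,d_v,\Omega_v)$'' is not a valid inference, and this is exactly the step where positivity was supposed to enter. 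To repair it you must first show that $C$, $D$ and each $\Omega(A)$ are Hermitian, so that your triples are real (this needs the homogeneity $\Omega_{\alpha w}=|\alpha|^2\Omega_w$ --- legitimate, being uniqueness among positive representations --- combined with the structure of the polarization identity), and then invoke uniqueness within \emph{real signed} representations, which is true but must itself be proved by Stieltjes inversion. The cleaner repair bypasses uniqueness entirely: by inversion, $\frac{1}{\pi}\int_a^b\im m_w(E+i\epsilon)\,dE$ converges (up to endpoint terms) to $\int_{(a,b)}(1+x^2)\,d\Omega_w$, and the integrand equals $\langle w,\im M(E+i\epsilon)w\rangle$, so $w\mapsto\Omega_w((a,b))$ is a limit of positive semi-definite Hermitian quadratic forms; polarizing a genuine quadratic form is then consistent and yields $\Omega(A)\ge0$, $C=C^{*}$ and $D\ge0$ in one stroke.
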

\begin{remark}
	For $M\in M_n(\mathbb{C}), \im{A}=\frac{1}{2i}\left(M-M^*\right)$.
\end{remark}
\subsection{Subordinacy Theory in the Half Line Case}
Let $J$ be a Jacobi matrix on $\mathbb{N}$, and for $\theta\in\left[0,\pi\right)$ define the operator $J_\theta$ by $J_\theta=J-\tan\left(\theta\right)\langle\delta_1,\cdot\rangle\delta_1$. It can be verified that $\delta_1$ is a cyclic vector for $J_\theta$, namely
\begin{equation}\label{cyc_eq}
	\ell^2(\mathbb{N})=\overline{\vspan\left\{J_\theta^k\delta_1:k\in\mathbb{N}\cup\left\{0\right\}\right\}}.
\end{equation}
Recall that given a self-adjoint operator $H$ defined on a Hilbert space $\mathcal{H}$ and $\psi,\varphi\in\mathcal{H}$, the spectral measure of $\psi$ and $\varphi$ w.r.t.\ $H$ is defined to be the unique Borel measure which satisfies
\begin{center}
	$\langle \psi,f\left(H\right)\varphi\rangle=\int_{\sigma\left(H\right)} f(t)d\mu_{\psi,\varphi}\left(t\right)$
\end{center}
for any continuous function $f:\sigma\left(H\right)\to\mathbb{C}$. If $\psi=\varphi$, we refer to this measure as the spectral measure of $\psi$ w.r.t.\ $H$. Denote by $\mu_\theta$ the spectral measure of $\delta_1$ w.r.t.\ $J_\theta$. It is not hard to see that (\ref{cyc_eq}) implies that for any $v\in\ell^2\left(N\right)$, the spectral measure of $v$ is absolutely continuous w.r.t.\ $\mu_\theta$. Thus, our task is to study of continuity properties of $\mu_\theta$. As noted before, these properties are related to its Borel transform which we denote by $m_\theta$. Note that by the definitions of $\mu_\theta$ and $m_\theta$, we have
\begin{equation}\label{m_hl_eq}
	m_\theta(z)=\int_\mathbb{R}\frac{d\mu_\theta(x)}{x-z}=\langle\delta_1,\left(J_\theta-z\right)^{-1}\delta_1\rangle.
\end{equation}
The main result in half-line subordinacy theory is the following.
\begin{theorem}\label{gilpe_th}
	For any $E\in\mathbb{R}$ and $\theta\in\left[0,\pi\right)$, $\im m_\theta\left(E+i0\right)=\infty$ if and only if $u_{\theta,E}$ is subordinate.
\end{theorem}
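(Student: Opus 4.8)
The plan is to follow the quantitative approach of Jitomirskaya and Last \cite{JL1}, reducing the boundary behavior of $\im m_\theta$ to a comparison of solution norms at a carefully chosen, $\epsilon$-dependent length scale. The natural starting point is a resolvent computation. Writing $z=E+i\epsilon$ and $\phi_z\coloneqq(J_\theta-z)^{-1}\delta_1\in\ell^2(\mathbb{N})$, one reads off from (\ref{m_hl_eq}) that $m_\theta(z)=(\phi_z)_1$, and from the first resolvent identity $(J_\theta-z)^{-1}-(J_\theta-\bar z)^{-1}=2i\epsilon\,(J_\theta-z)^{-1}(J_\theta-\bar z)^{-1}$ together with $(J_\theta-\bar z)^{-1}=\bigl[(J_\theta-z)^{-1}\bigr]^{*}$ one obtains
\begin{equation}\label{imm_eq}
	\im m_\theta(E+i\epsilon)=\epsilon\,\bigl\|(J_\theta-z)^{-1}\delta_1\bigr\|^2=\epsilon\,\|\phi_z\|^2 .
\end{equation}
Thus $\im m_\theta(E+i0)=\infty$ is \emph{exactly} the statement that $\epsilon\,\|\phi_z\|^2\to\infty$ as $\epsilon\downarrow0$. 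Since $\phi_z$ solves the homogeneous recursion for $n\ge2$ and lies in $\ell^2$, it is a fixed multiple of the Weyl solution; working through \eqref{imm_eq} lets me control $\im m_\theta$ directly, which matters because $|m_\theta|\to\infty$ does not by itself force $\im m_\theta\to\infty$.

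Next I would bring in the real-energy solutions. Let $u_{\theta,E}$ be the solution of (\ref{ev_eq_1dim}) obeying the $\theta$-boundary condition and $v_{\theta,E}$ a linearly independent solution obeying the complementary condition, normalized so their Wronskian equals $1$. For $\epsilon>0$ define $L(\epsilon)$ implicitly by
\begin{equation}\label{length_eq}
	\|u_{\theta,E}\|_{L(\epsilon)}\,\|v_{\theta,E}\|_{L(\epsilon)}=\frac{1}{2\epsilon}.
\end{equation}
Because $J$ has bounded coefficients it is in the limit-point case, so at most one solution is $\ell^2$; hence $L\mapsto\|u_{\theta,E}\|_L\|v_{\theta,E}\|_L$ is continuous, eventually strictly increasing, and tends to $\infty$, so \eqref{length_eq} determines a unique $L(\epsilon)$ with $L(\epsilon)\to\infty$ as $\epsilon\downarrow0$. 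The heart of the matter is the Jitomirskaya--Last estimate: there is a constant $C\in(1,\infty)$, depending only on $\sup_e|a_e|+\sup_v|b_v|$, with
\begin{equation}\label{jl_eq}
	C^{-1}\,\frac{\|v_{\theta,E}\|_{L(\epsilon)}}{\|u_{\theta,E}\|_{L(\epsilon)}}\le\im m_\theta(E+i\epsilon)\le C\,\frac{\|v_{\theta,E}\|_{L(\epsilon)}}{\|u_{\theta,E}\|_{L(\epsilon)}}
\end{equation}
for all small $\epsilon$. To prove \eqref{jl_eq} I would, following \cite{JL1}, split $\|\phi_z\|^2$ into the block $n\le L(\epsilon)$ and the tail $n>L(\epsilon)$; approximate the complex-energy solution $\phi_z$ on $[1,L(\epsilon)]$ by the real-energy solutions using constancy of the Wronskian and a transfer-matrix Gronwall bound on $\|T_n(z)-T_n(E)\|$; and show that the normalization \eqref{length_eq} is precisely the scale at which the tail becomes comparable to the block. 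Inserting the result into \eqref{imm_eq} gives \eqref{jl_eq}.

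This solution-comparison step is where I expect the real difficulty to lie: the error incurred by replacing $z$ with $E$ in the transfer matrices must be controlled uniformly up to the $\epsilon$-dependent cutoff $L(\epsilon)$, and the decay of $\phi_z$ past the Weyl-transition scale must be quantified, so that neither the bulk nor the tail is lost. Everything else is comparatively routine.

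Granting \eqref{jl_eq}, the equivalence follows. By \eqref{jl_eq}, $\im m_\theta(E+i0)=\infty$ holds if and only if $\|u_{\theta,E}\|_{L(\epsilon)}/\|v_{\theta,E}\|_{L(\epsilon)}\to0$ as $\epsilon\downarrow0$. Since $\epsilon\mapsto L(\epsilon)$ is continuous and surjects onto a half-line $[L_0,\infty)$, while $L\mapsto\|u_{\theta,E}\|_L/\|v_{\theta,E}\|_L$ is continuous, the limit along $L(\epsilon)$ as $\epsilon\downarrow0$ coincides with the limit along $L\to\infty$. Hence $\im m_\theta(E+i0)=\infty$ if and only if $\|u_{\theta,E}\|_L/\|v_{\theta,E}\|_L\to0$, which, since every solution is a combination of $u_{\theta,E}$ and $v_{\theta,E}$ so that subordinacy need only be tested against $v_{\theta,E}$, is by (\ref{sub_eq}) exactly the statement that $u_{\theta,E}$ is subordinate. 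As a consistency check on the direction of \eqref{jl_eq}, I would verify the eigenvalue case $E\in\sigma_{pp}(J_\theta)$: there $u_{\theta,E}$ is the $\ell^2$ eigenfunction (so $\|u_{\theta,E}\|_L$ stays bounded while $\|v_{\theta,E}\|_L\to\infty$, making the ratio tend to $0$), while $m_\theta$ has a pole and $\im m_\theta(E+i\epsilon)\to\infty$, matching \eqref{jl_eq} and fixing the normalization of $u_{\theta,E},v_{\theta,E}$.
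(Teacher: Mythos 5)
A preliminary remark on the comparison: the paper offers no proof of Theorem \ref{gilpe_th} at all --- it cites \cite{GP} for the continuum case and asserts that the discrete case follows from \cite{JL1} --- so what you are really doing is reconstructing the Jitomirskaya--Last argument, and your architecture (the resolvent identity (\ref{imm_eq}), the $\epsilon$-dependent length scale (\ref{length_eq}), a two-sided norm comparison, then trading limits along $L(\epsilon)$ for limits along $L\to\infty$) is exactly theirs. The genuine gap is in the key estimate (\ref{jl_eq}): the Jitomirskaya--Last inequality controls the \emph{modulus} $\left|m_\theta(E+i\epsilon)\right|$, not $\im m_\theta(E+i\epsilon)$, by the ratio $\|v_{\theta,E}\|_{L(\epsilon)}/\|u_{\theta,E}\|_{L(\epsilon)}$, and with $\im$ in place of $\left|\cdot\right|$ the lower bound in (\ref{jl_eq}) is false. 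Indeed, take any real $E$ with $|E|>\|J\|+\left|\tan\theta\right|$, so that $E\notin\sigma(J_\theta)$. Then $u_{\theta,E}$ is not $\ell^2$ (otherwise $E$ would be an eigenvalue of $J_\theta$), and for all but countably many such $E$ neither is $v_{\theta,E}$ (the exceptions are eigenvalues of an auxiliary self-adjoint half-line operator); hence both solutions contain a nonzero component of the exponentially growing solution, and $\|v_{\theta,E}\|_{L}/\|u_{\theta,E}\|_{L}$ converges to a \emph{positive finite} limit as $L\to\infty$. Meanwhile $\im m_\theta(E+i\epsilon)=\epsilon\int\frac{d\mu_\theta(x)}{|x-E-i\epsilon|^2}\leq\epsilon\,\mathrm{dist}\left(E,\sigma(J_\theta)\right)^{-2}\to0$. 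No constant $C$ depending only on the coefficient bounds can reconcile these two facts; your upper bound survives only because $\im m_\theta\leq\left|m_\theta\right|$.

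This is not a repairable slip within your scheme, because the implication that needs the lower bound --- $u_{\theta,E}$ subordinate $\Rightarrow\im m_\theta(E+i0)=\infty$ --- is false pointwise, not merely unproven. What the correct inequality yields is: $u_{\theta,E}$ is subordinate if and only if $\lim_{\epsilon\rightarrow0}\left|m_\theta(E+i\epsilon)\right|=\infty$. From this, the direction $\im m_\theta(E+i0)=\infty\Rightarrow$ subordinacy is immediate (again since $\im m_\theta\leq\left|m_\theta\right|$), but the converse genuinely fails: let $d\mu_\theta=\chi_{[0,1]}\,d\lambda$, which by the measure/Jacobi-matrix correspondence invoked in Section 5 of the paper is the $\delta_1$ spectral measure of some half-line Jacobi matrix; at $E=0$ one computes $m_\theta(i\epsilon)=-\log\epsilon+i\frac{\pi}{2}+o(1)$, so $\left|m_\theta(0+i0)\right|=\infty$ and the corresponding solution is subordinate, yet $\im m_\theta(0+i0)=\frac{\pi}{2}<\infty$. (Your consistency check at eigenvalues cannot detect this, since there $\left|m_\theta\right|$ and $\im m_\theta$ diverge together.) The statement you should aim for is therefore the $\left|m_\theta\right|$ version, which is how \cite{JL1} state it, and which is also what the paper actually uses downstream: the support statement for the singular part needs only the inclusion $\left\{E:\im m_\theta(E+i0)=\infty\right\}\subseteq\left\{E:u_{\theta,E}\text{ subordinate}\right\}$, and the step in the proof of Theorem \ref{main_thm} where subordinacy on a half-line $G_j$ is deduced from $m_j(E+i0)$ being real and finite rests on divergence of the modulus of the rotated $m$-function, not on the literal statement with $\im$.
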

Theorem \ref{gilpe_th} was originally proved in the continuous setting by Gilbert and Pearson in \cite{GP}. Its discrete analogue is a direct consequence of the results presented in \cite{JL1}.
\begin{remark}
	In particular, combining Theorems \ref{supp_thm} and \ref{gilpe_th}, we get that the singular part of $\mu_\theta$ is supported on the set of energies for which $u_{\theta,E}$ is subordinate.
\end{remark}

\subsection{Jacobi Matrices on Star-like Graphs}\label{per_slike_subsection}
We begin by introducing the notion of a {\it star-like} graph. Let $C=\langle V_C,E_C\rangle$ be a finite connected graph. A star-like graph is given by selecting a subset of vertices $V_0\subseteq V_C$, and attaching a copy of $\mathbb{N}$ to each vertex $v\in V_0$. Formally, $G=\langle V,E\rangle$ is given by
\begin{center}
	$V=V_C\sqcup\left\{v_i^u\right\}_{i\in\mathbb{N},u\in V_0}$\\
	$E=E_C\sqcup\left\{\left(u,v_1^u\right):i\in V_0\right\}\sqcup\left\{\left(v_i^u,v_{i+1}^u\right):i\in\mathbb{N},u\in V_0\right\}$
\end{center}
where $\sqcup$ denotes a disjoint union.
\begin{remark}
	Given a star-like graph $G$, it is possible to construct $G$ by choosing different finite sub-graphs $C_1,C_2$ of it and applying the above procedure. However, our results do not depend in the choice of $C$, and so we fix such a finite graph $C$ and from now on we refer to it as {\it the compact component} of $G$.
\end{remark}
Let $J$ be a Jacobi matrix on $G$ which acts on $\ell^2\left(G\right)$ as in (\ref{op_eq}). Unlike the half-line case, there need not be a cyclic vector for $J$. However, we have the following:
\begin{claim}\label{cyclic_clame}
	$\ell^2(G)=\overline{\vspan\left\{J^k\delta_v:v\in V_C,k\in\mathbb{N}\cup\left\{0\right\}\right\}}$.
\end{claim}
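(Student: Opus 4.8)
The plan is to show that the closed subspace $\mathcal{V}\coloneqq\overline{\vspan\left\{J^k\delta_v:v\in V_C,k\in\mathbb{N}\cup\left\{0\right\}\right\}}$ contains $\delta_w$ for every vertex $w\in V$. Since $\left\{\delta_w\right\}_{w\in V}$ is an orthonormal basis of $\ell^2(G)$, this forces $\mathcal{V}=\ell^2(G)$ and proves the claim.

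The first observation I would record is that $\mathcal{V}$ is invariant under $J$. Indeed, on the linear span $\vspan\left\{J^k\delta_v\right\}$ the map $J$ sends $J^k\delta_v\mapsto J^{k+1}\delta_v$, so it maps this span into itself; since $J$ is bounded it preserves the closure, giving $J\mathcal{V}\subseteq\mathcal{V}$. Consequently, whenever I know that $\delta_w\in\mathcal{V}$ for some vertex $w$, I may freely use $J\delta_w\in\mathcal{V}$.

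With invariance in hand, the half-lines are climbed one vertex at a time. Fix $u\in V_0$ and argue by induction on $i$ that $\delta_{v_i^u}\in\mathcal{V}$. For $i=1$, note $\delta_u\in\mathcal{V}$ (the case $k=0$), hence $J\delta_u\in\mathcal{V}$; by (\ref{op_eq}), $J\delta_u=b_u\delta_u+\sum_{w\sim u}a_{(u,w)}\delta_w$, and by the construction of $G$ the only neighbor of $u$ outside $V_C$ is $v_1^u$. Subtracting the terms $b_u\delta_u$ and $a_{(u,w)}\delta_w$ with $w\in V_C$ — all of which lie in $\mathcal{V}$ — leaves $a_{(u,v_1^u)}\delta_{v_1^u}\in\mathcal{V}$, so $\delta_{v_1^u}\in\mathcal{V}$. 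For the inductive step, $\delta_{v_i^u}\in\mathcal{V}$ gives $J\delta_{v_i^u}\in\mathcal{V}$; since $v_i^u$ (for $i\geq1$) has exactly the two neighbors $v_{i-1}^u$ (with the convention $v_0^u\coloneqq u$) and $v_{i+1}^u$, one has $J\delta_{v_i^u}=a_{(v_i^u,v_{i-1}^u)}\delta_{v_{i-1}^u}+b_{v_i^u}\delta_{v_i^u}+a_{(v_i^u,v_{i+1}^u)}\delta_{v_{i+1}^u}$, and isolating the last term shows $\delta_{v_{i+1}^u}\in\mathcal{V}$. In both steps the division by the coefficient $a_{(\cdot,\cdot)}$ is where the standing assumption that the off-diagonal entries of $J$ are nonzero enters.

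There is no serious obstacle here; the content is entirely the combination of the $J$-invariance of the cyclic subspace with the tree-like structure of the half-lines. The one point that genuinely must be used is the non-vanishing of the hopping terms $a_e$ along the half-lines: if some $a_e$ vanished, the corresponding half-line would decouple beyond that edge and the stated equality would fail. Assuming, as is standard for Jacobi matrices, that $a_e\neq0$ for every edge, the induction terminates for each $u\in V_0$ and each $i$, and together with $\delta_v\in\mathcal{V}$ for $v\in V_C$ we obtain $\delta_w\in\mathcal{V}$ for all $w\in V$, completing the proof.
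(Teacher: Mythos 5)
Your proof is correct and follows essentially the same route as the paper: both arguments show that every $\delta_w$ lies in the cyclic subspace by inducting outward along each half-line, using the equation for $J\delta_u$, subtracting the terms already known to lie in the subspace, and dividing by the nonzero hopping coefficient $a_e$. Your explicit use of the $J$-invariance of the closed subspace $\mathcal{V}$ is just a cleaner packaging of what the paper does implicitly with the algebraic span, so there is no substantive difference.
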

Namely, the set $\left\{\delta_v:v\in V_C\right\}$ is cyclic for $J$.
\begin{proof}
	We give a sketch of the proof. It suffices to show that
	\begin{center}
		$\left\{\delta_v:v\in V\right\}\subseteq\vspan{\left\{J^k\delta_v:v\in V_C,k\in\mathbb{N}\cup\left\{0\right\}\right\}}\coloneqq A$.
	\end{center}
	Let $v\in V$. If $v\in V_C$, then clearly $\delta_v\in A$. Otherwise, denote by $d(v,C)$ the length of a minimal path from $v$ to some vertex in $V_C$, and let \mbox{$B_k=\left\{v\in V:d(v,C)=k\right\}$}. If $v\in B_1$, then by the definition of $G$, there exists some unique $u\in V_C$ such that $v\sim u$. By the definition of $J$,
	\begin{center}
		$J\delta_u=b_u\delta_u+a_{\left(u,v\right)}\delta_v+\sum\limits_{u\sim w\in C}a_{\left(u,w\right)}\delta_w$.
	\end{center}
	Now, since $J\delta_u,\,b_u\delta_u+\sum\limits_{u\sim w\in C}a_{\left(u,w\right)}\delta_w\in A$, we get that $a_{\left(u,v\right)}\delta_v\in A$ as the difference of elements in $A$, and since $a_{\left(u,v\right)}\neq 0$, we get that $\delta_v\in A$. Thus, we get that $B_1\subseteq A$. Now, noting that by connectedness of $G$ and by the fact that any connected component of the induced graph on $V\setminus V_C$ is isomorphic to $\mathbb{N}$, we get that every $v\in B_2$ has a unique vertex in $u\in B_1$ such that $v\sim u$. From here, one can proceed by induction.
\end{proof}
Denote $V_C=\left\{v_1,\ldots,v_n\right\}$, and let $\delta_k\coloneqq\delta_{v_k}$. Claim \ref{cyclic_clame} implies that every spectral measure of $J$ is absolutely continuous with respect to the measure $\mu\coloneqq\sum\limits_{k=1}^{n} \mu_k$ where $\mu_k$ is the spectral measure of $\delta_k$, and so our purpose is to study the continuity properties of $\mu$. Given $z\in\mathbb{C}_+$, define $M(z)\in M_n(\mathbb{C})$ by
\begin{equation}\label{Mz_def}
	\left(M(z)\right)_{kj}=\langle\delta_k,\left(J-z\right)^{-1}\delta_j\rangle.
\end{equation}
$M:\mathbb{C}_+\to M_n(\mathbb{C})$ is analytic and for any $z\in\mathbb{C}_+$, $\im{M(z)}\geq 0$. Let $\Omega$ be the matrix-valued measure given by Theorem \ref{supp_thm_m}. By the definition of the spectral measure we get that for any $1\leq k\leq n$, $\mu_k=\Omega_{kk}$, and so $\mu=\tr{\Omega}$. For $1\leq k\leq n$, define $G_k$ in the following way:
\begin{itemize}
	\item If $v_k\in V_0$, then $G_k$ is the graph induced on $\left\{v_k\right\}\cup \left\{v_i^{v_k}:i\in\mathbb{N}\right\}$. Note that $G_k\cong\mathbb{N}$ with $v_k$ as the origin.
	\item Otherwise, $G_k$ consists of the singleton $\left\{v_k\right\}$.
\end{itemize}
Finally, denote by $J_k$ the operator $P_kJP_k$ on $\ell^2(G_k)$, where $P_k$ is the projection of $\ell^2(G)$ onto $\ell^2(G_k)$. Note that in the first case, $J_k$ is a Jacobi matrix on the half-line $G_k$, and in the latter case $J_k$ acts on $\mathbb{C}$ by multiplication by $b_{v_k}$. In any case, $J_k$ is a self-adjoint operator.

\section{Proof of Theorem \ref{main_thm}}
Let $1\leq k\leq n$ and let $z\in\mathbb{C}_+$. Denote by $\widetilde{u}_k\in\ell^2(G_k)$ the unique $\ell^2$ solution to $\left(J_k-z\right)u=\delta_k$, and by $\varphi_k\in\ell^2(G)$ the unique $\ell^2$ solution to $\left(J-z\right)\varphi=\delta_k$ (these solutions exist and are unique since $J_k,J$ are self-adjoint). Let $u_k\in\ell^2(G)$ be defined by
\begin{center}
	$\left(u_k\right)_{v}=\begin{cases}
		\left(\widetilde{u}_k\right)_{v} & w\in G_k\\
		0 & \text{otherwise}
	\end{cases}$.
\end{center}
For $1\leq j<k\leq n$, the supports of $u_j$ and $u_k$ are disjoint, and so the collection $\left\{u_k:1\leq k\leq n\right\}$ is linearly independent. Denote by $m_k$ the Borel transform of the spectral measure of $\delta_k$ with respect to $J_k$. By (\ref{m_hl_eq}) $m_k(z)=\left(u_k\right)_1$, and so we have
\begin{center}
	$\left((J-z)u_k\right)_v=\begin{cases}
		1 & v=v_k\\
		a_{\left(v_k,v\right)}m_k(z) & v\sim v_k,v\in C\\
		0 & \text{otherwise}
	\end{cases}$
\end{center}
For $1\leq k\leq n$, define $w_k\in\mathbb{C}^n$ by $w_{k}^{j}=\left(\left(J-z\right)u_k\right)_{v_j}$.
\begin{claim}\label{indep_claim}
	The collection $\left\{w_k:1\leq k\leq n\right\}$ is linearly independent.
\end{claim}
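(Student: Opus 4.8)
The plan is to deduce the independence of $\{w_k\}$ from the already-established independence of $\{u_k\}$, by rewriting each $u_k$ in terms of the resolvent vectors $\varphi_j$. The key observation is that $(J-z)u_k$ is finitely supported and lives entirely on $V_C$: from the computation of $\left((J-z)u_k\right)_v$ carried out just before the claim, this vector vanishes except when $v=v_k$ or $v$ is a neighbor of $v_k$ lying in $C$, and every such $v$ belongs to $V_C$. Consequently, by the very definition $w_k^j=\left((J-z)u_k\right)_{v_j}$,
\begin{center}
	$(J-z)u_k=\sum\limits_{j=1}^n w_k^j\,\delta_{v_j}$.
\end{center}

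First I would apply $(J-z)^{-1}$, a bounded operator since $z\in\mathbb{C}_+$ and $J$ is self-adjoint, to both sides. As $u_k\in\ell^2(G)$ and $\varphi_j=(J-z)^{-1}\delta_j$, this gives
\begin{center}
	$u_k=\sum\limits_{j=1}^n w_k^j\,\varphi_j$,
\end{center}
which exhibits each $u_k$ as a combination of the $\varphi_j$ whose coefficients are precisely the entries of $w_k$.

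The independence of $\{w_k\}$ then follows formally. Suppose $\sum_{k=1}^n c_k w_k=0$; reading this coordinate by coordinate yields $\sum_{k=1}^n c_k w_k^j=0$ for every $j$. Substituting into the identity above,
\begin{center}
	$\sum\limits_{k=1}^n c_k u_k=\sum\limits_{j=1}^n\Big(\sum\limits_{k=1}^n c_k w_k^j\Big)\varphi_j=0$,
\end{center}
and since $\{u_k:1\le k\le n\}$ is linearly independent we conclude $c_k=0$ for all $k$, as desired.

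I do not expect a genuine obstacle here: the entire content is the reorganization $u_k=\sum_j w_k^j\varphi_j$, after which the conclusion is purely linear-algebraic and, notably, uses independence of the $u_k$ rather than of the $\varphi_j$. The only points deserving care are verifying that $(J-z)u_k$ leaves no surviving terms along the half-lines — this rests on $J$ and $J_k$ agreeing at every vertex except the attachment vertex $v_k$ — and observing that the ostensibly more direct alternative of computing $\det\big(I+A_C D(z)\big)$, with $A_C$ the weighted adjacency matrix of $C$ and $D(z)=\diag(m_1(z),\dots,m_n(z))$, is in fact the harder route and is entirely sidestepped by the basis-change viewpoint.
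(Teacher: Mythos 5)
Your proposal is correct and is essentially the paper's own argument: both rest on the observation that $(J-z)u_k$ is supported on $V_C$ with coordinate vector $w_k$, together with the invertibility of $J-z$ for $z\in\mathbb{C}_+$ and the disjointness of the supports of the $u_k$. The only difference is packaging: the paper argues by contradiction (a nonzero $u=\sum_k\alpha_k u_k$ with $(J-z)u=0$ would make $z$ a non-real eigenvalue of the self-adjoint $J$), whereas you apply the resolvent directly to obtain $u_k=\sum_{j}w_k^{j}\varphi_j$ and then transfer linear independence from $\left\{u_k\right\}$ to $\left\{w_k\right\}$.
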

\begin{proof}
	Assume by contradiction that $\sum\limits_{k=1}^{n}\alpha_kw_k=0$, and there exists $1\leq j\leq n$ such that $\alpha_j\neq 0$. Let $u=\sum\limits_{k=1}^{n}\alpha_ku_k$. The fact that $\alpha_j\neq 0$ along with the disjointness of the supports of $u_j$ and $u_k$ for any $k\neq j$ implies that $u|_{G_j}\neq 0$. Now, note that for any $1\leq k\leq n$, $\left(J-z\right)u_k$ vanishes outside $V(C)$ which implies that $\left(J-z\right)u$ also vanishes outside $V(C)$. In addition, $\left(\left(J-z\right)u_k\right)|_{V(C)}=w_k$, and so $\left(\left(J-z\right)u\right)|_{V(C)}=\sum\limits_{k=1}^{n}\alpha_kw_k=0$. This implies that $z$ is an eigenvalue of $J$, which is a contradiction to $J$ being self-adjoint.
\end{proof}
The following lemma is crucial to the proof of Theorem \ref{main_thm}.
\begin{lemma}\label{Schur_lemma}
	Let $M(z)$ be as in (\ref{Mz_def}). Then for any $z\in\mathbb{C}_+$, $M(z)$ is invertible and
	\begin{equation}\label{adj_eq}
		M(z)^{-1}=A+\diag(\frac{1}{m_1(z)},\ldots,\frac{1}{m_n(z)})
	\end{equation}
	where $A$ is defined by
	\begin{center}
		$A_{ij}=\begin{cases}
			a_{\left(v_i,v_j\right)} & v_i\sim v_j\\
			0 & otherwise
		\end{cases}$
	\end{center}
\end{lemma}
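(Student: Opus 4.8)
The plan is to produce the resolvent $(J-z)^{-1}\delta_j$, restricted to the compact component, explicitly as a linear combination of the building blocks $u_k$ already constructed, and then read off $M(z)$ entry by entry. The starting observation is that any combination $u=\sum_{k=1}^n\alpha_k u_k$ is an $\ell^2$ vector whose image $(J-z)u$ is supported on $V(C)$, since each $(J-z)u_k$ is (by the computation recorded just before the lemma) supported at $v_k$ and its neighbours in $C$. Collecting the vectors $w_k$ as the columns of a matrix $W$, i.e.\ $W_{jk}=w_k^j=\bigl((J-z)u_k\bigr)_{v_j}$, this says that $(J-z)u$, regarded as an element of $\mathbb{C}^n$ supported on $V(C)$, equals $W\alpha$. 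The explicit form of $w_k$ shows $W_{jk}=\delta_{jk}+a_{(v_k,v_j)}m_k(z)$ when $v_j\sim v_k$; using $a_{(v_k,v_j)}=a_{(v_j,v_k)}$ this is exactly $W=I+AD$, where $D=\diag(m_1(z),\dots,m_n(z))$ and $A$ is the matrix in the statement.

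First I would record two invertibility facts. By Claim \ref{indep_claim} the columns $w_k$ are linearly independent, so $W$ is invertible. And $D$ is invertible because, for $z\in\mathbb{C}_+$,
\[
\im m_k(z)=\int_\mathbb{R}\frac{\im z}{|x-z|^2}\,d\mu_k(x)>0,
\]
since the spectral measure $\mu_k$ of $\delta_k$ is nonzero; in particular $m_k(z)\neq 0$. Now choose $\alpha=W^{-1}e_j$. Then $u=\sum_k\alpha_k u_k$ satisfies $(J-z)u=\delta_j$, and because $J$ is self-adjoint and $z\notin\mathbb{R}$ the $\ell^2$-solution of this equation is unique, whence $u=\varphi_j=(J-z)^{-1}\delta_j$. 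This gives the key identity $\varphi_j=\sum_k (W^{-1})_{kj}\,u_k$.

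The remaining step is bookkeeping. Since $u_k$ is supported on $G_k$ and $V(C)\cap G_k=\{v_k\}$, its only nonzero entry in $V(C)$ is $(u_k)_{v_k}=m_k(z)$, i.e.\ $(u_k)_{v_i}=\delta_{ik}\,m_k(z)$ for $v_i\in V(C)$. Therefore
\[
M(z)_{ij}=(\varphi_j)_{v_i}=\sum_k (W^{-1})_{kj}(u_k)_{v_i}=m_i(z)\,(W^{-1})_{ij}=(DW^{-1})_{ij},
\]
so $M(z)=DW^{-1}$. This already yields invertibility of $M(z)$, and using $W=I+AD$,
\[
M(z)^{-1}=WD^{-1}=(I+AD)D^{-1}=D^{-1}+A=A+\diag\!\Bigl(\tfrac{1}{m_1(z)},\dots,\tfrac{1}{m_n(z)}\Bigr),
\]
which is (\ref{adj_eq}).

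I expect the one genuinely delicate point to be the identification $\varphi_j=\sum_k(W^{-1})_{kj}u_k$. It hinges on two things: that the difference between $J_k$ and $J$ acts only across the edges joining $v_k$ to $V(C)$, which is what forces $(J-z)u_k$ to be supported on $V(C)$ and equal to $w_k$ there; and the uniqueness of the $\ell^2$-solution of $(J-z)u=\delta_j$, which identifies the combination we constructed with the resolvent. Once this localization is secured, everything else reduces to the factorization $W=I+AD$ together with the two invertibility facts, so the algebra is routine.
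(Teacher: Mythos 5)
Your proof is correct and follows essentially the same route as the paper's: both identify $\varphi_j=\sum_k\alpha_k u_k$ with $\alpha=F(z)^{-1}e_j$ (your $W$ is the paper's $F(z)$), use Claim \ref{indep_claim} and the positivity of $\im m_k(z)$ for invertibility, and arrive at $M(z)=D\,F(z)^{-1}$ before inverting. Your explicit factorization $W=I+AD$ is simply the paper's concluding ``straightforward calculation'' written out in full.
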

\begin{proof}
	Let $1\leq k\leq n$. Note that if $\alpha_1,\ldots,\alpha_n\in\mathbb{C}$ satisfy
	\begin{center}
		$\sum\limits_{j=1}^{n}\alpha_jw_j=\delta_k$,
	\end{center}
	then $\varphi_k=\sum\limits_{j=1}^{n}\alpha_ju_j$. Denote by $F(z)$ the matrix whose $j$'th column is $w_j$. By claim \ref{indep_claim}, $F(z)$ is invertible for every $z\in\mathbb{C}_+$, and $\alpha_1,\ldots,\alpha_n$ can be retrieved by the equation
	\begin{center}
		$F(z)\left(\begin{matrix}
			x_1 \\
			\vdots \\
			x_n
		\end{matrix}\right)=e_k$,
	\end{center}
	i.e.
	\begin{center}
		$\left(\begin{matrix}
			\alpha_1 \\
			\vdots \\
			\alpha_n
		\end{matrix}\right)=F(z)^{-1}e_k$,
	\end{center}
	where $e_k$ is the $k$'th element in the standard basis of $\mathbb{C}^n$. Thus, we have that $\alpha_j=\left(F(z)\right)^{-1}_{jk}$. Note that
	\begin{center}
		$\left(M(z)\right)_{kj}=\langle\delta_k,\left(J-z\right)^{-1}\delta_j\rangle=\left(\varphi_j\right)_{v_k}$
	\end{center}
	and since $u_k(v_j)=0$ whenever $k\neq j$ and $\left(u_k\right)_{v_k}=m_k(z)$, we get that
	\begin{center}
		$\left(M(z)\right)_{kj}=\left(F(z)\right)^{-1}_{jk}\cdot m_j(z)$,
	\end{center}
	i.e.
	\begin{center}
		$M(z)=\diag\left(m_1(z),\ldots,m_n(z)\right)\cdot \left(F(z)\right)^{-1}$.
	\end{center}
	Since for any $1\leq k\leq n$, $m_k(z)\in\mathbb{C}_+$, and since $F(z)$ is invertible, we get that $M(z)$ is invertible and
		\begin{center}
			$M(z)^{-1}=F(z)\cdot\diag\left(\frac{1}{m_1(z)},\ldots,\frac{1}{m_n(z)}\right)$.
		\end{center}
	Now, (\ref{adj_eq}) follows from a straightforward calculation.
\end{proof}
\begin{remark}
	This is a special case of an identity known as the Krein, Feshbach or Schur formula (see, e.g., \cite[Chapter 5]{AW}), which says that given a self-adjoint operator $H$ acting on a Hilbert space $\mathcal{H}$ and $P$ the orthogonal projection onto a finite-dimensional subspace $\mathcal{H}_0\subseteq\mathcal{H}$, if we write
	\begin{center}
		$H=PHP+\hat{H}$,
	\end{center}
	then, denoting $H_P\coloneqq PHP$, the following identity holds:
	\begin{equation}\label{sch_id}
		P\left(H-z\right)^{-1}P=\left[H_P+\left[P\left(\hat{H}-z\right)^{-1}P\right]^{-1}_P\right]^{-1}_P
	\end{equation}
	where $\left[\cdot\right]^{-1}_P$ indicates that we take the inverse only on $\mathcal{H}_0$. Note that in our case, $M\left(z\right)=P_C\left(J-z\right)^{-1}P_C$, and so (\ref{adj_eq}) is given by taking the inverse (inside $\mathcal{H}_0$) on both sides of (\ref{sch_id}), and explicitly computing the RHS. The proof of Lemma \ref{Schur_lemma} is a more direct way of obtaining this result.
\end{remark}
We will also need the following
\begin{claim}\label{ext_claim}
	Let $E\in\mathbb{R}$ and denote $J|_C=P_CJP_C$, where $P_C$ is the projection of $\ell^2\left(G\right)$ onto $\ell^2\left(C\right)$. Suppose that $\widetilde{\varphi}\in\ell^2\left(C\right)$ satisfies
	\begin{center}
		$\left(J|_C\widetilde{\varphi}\right)_{v_k}=E\varphi_{v_k}$
	\end{center}
	for any $1\leq k\leq n$ such that $G_k=\left\{v_k\right\}$.
	Then $\widetilde{\varphi}$ can be extended to a function $\varphi:G\to\mathbb{C}$ such that $J\varphi=E\varphi$.
\end{claim}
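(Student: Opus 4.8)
The plan is to construct $\varphi$ by keeping $\widetilde{\varphi}$ on $C$ and then propagating solutions of the eigenvalue recurrence outward along each attached half-line, using the eigenvalue equation \emph{at the root} of a half-line to supply the one piece of initial data that $\widetilde{\varphi}$ does not already prescribe. The guiding observation is that each half-line carries a three-term (Jacobi) recurrence whose solution is determined by two consecutive values: the value at the root $v_k$, which is fixed to be $\widetilde{\varphi}_{v_k}$, and the value at the first off-root vertex $v_1^{v_k}$, which is the single remaining degree of freedom.

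Concretely, I would set $\varphi_v = \widetilde{\varphi}_v$ for all $v \in V_C$ and then check the equation $(J\varphi)_v = E\varphi_v$ according to the type of $v$. For a root $v_k$ with $G_k = \{v_k\}$, every neighbor of $v_k$ lies in $C$, so $(J\varphi)_{v_k}$ coincides with $(J|_C\widetilde{\varphi})_{v_k}$; the hypothesis then gives $(J\varphi)_{v_k} = E\widetilde{\varphi}_{v_k} = E\varphi_{v_k}$, and there is nothing to define. For a root $v_k \in V_0$, write the attached half-line as $v_k, v_1^{v_k}, v_2^{v_k}, \ldots$. The equation at $v_k$ reads $\sum_{w \sim v_k,\, w \in C} a_{(v_k,w)}\varphi_w + a_{(v_k, v_1^{v_k})}\,\varphi_{v_1^{v_k}} + b_{v_k}\varphi_{v_k} = E\varphi_{v_k}$; here every term except $\varphi_{v_1^{v_k}}$ is already determined by $\widetilde{\varphi}$, and since the off-diagonal weight $a_{(v_k, v_1^{v_k})}$ is nonzero, this equation determines $\varphi_{v_1^{v_k}}$ uniquely.

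Having fixed $\varphi_{v_k}$ and $\varphi_{v_1^{v_k}}$, I would then define $\varphi_{v_{i+1}^{v_k}}$ for $i \geq 1$ inductively from the interior equation $a_{(v_{i-1}^{v_k}, v_i^{v_k})}\,\varphi_{v_{i-1}^{v_k}} + a_{(v_i^{v_k}, v_{i+1}^{v_k})}\,\varphi_{v_{i+1}^{v_k}} + b_{v_i^{v_k}}\varphi_{v_i^{v_k}} = E\varphi_{v_i^{v_k}}$, which is solvable at each step precisely because the off-diagonal weights $a_e$ of a Jacobi matrix are nonzero (the same fact exploited earlier). By construction the eigenvalue equation then holds at $v_k$ and at every half-line vertex $v_i^{v_k}$, $i \geq 1$.

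Finally I would note that these cases exhaust $V$: each vertex either lies in $V_C$ (handled by the first two cases) or lies on exactly one half-line $G_k$ (handled by the recurrence), and the half-lines are pairwise disjoint, so $\varphi$ is well defined on all of $G$ and satisfies $J\varphi = E\varphi$ everywhere. I do not expect a genuine obstacle here; the only point requiring care is the bookkeeping of which equation \emph{defines} a value versus which is \emph{verified}. The equation at the root of a half-line is spent to pin down $\varphi_{v_1^{v_k}}$ rather than assumed, whereas at a root without a half-line it is guaranteed by hypothesis. The reason the construction closes is exactly this matching of one constraint (the equation at each half-line root) against one free parameter (the value at the first off-root vertex).
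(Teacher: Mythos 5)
Your proposal is correct and follows essentially the same route as the paper's proof: the equation at a root without a half-line holds automatically for any extension since all its neighbors lie in $C$, while along each attached half-line the values are determined by the difference equation (root equation fixing the first off-root value, then the three-term recurrence), using $a_e\neq 0$. Your write-up just makes explicit the bookkeeping that the paper compresses into the phrase ``the values of $\varphi$ on $G_k$ can be determined via the difference equation.''
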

\begin{remark}
	Note that $\varphi$ need not be in $\ell^2(G)$, and so the expression $J\varphi=E\varphi$ should be thought of as a difference equation.
\end{remark}
\begin{proof}
	First, note that whenever $G_k$ consists of a single vertex, $v_k$ has no neighbors outside $C$ and so for any $\psi:G\to\mathbb{C}$ which extends $\widetilde{\varphi}$,
	\begin{center}
		$J|_C\widetilde{\varphi}_{v_k}=\left(J\psi\right)_{v_k}=E\psi_{v_k}$.
	\end{center}
	For any $1\leq k\leq n$ such that $G_k\cong\mathbb{N}$, the values of $\varphi$ on $G_k$ can be determined via the difference equation.
\end{proof}
 We are now ready to prove Theorem \ref{main_thm}.
\begin{proof}
	 Using the notations from the previous section, for $1\leq j,k\leq n$ denote by $\mu_{jk}$ the spectral measure of $\delta_j$ and $\delta_k$, i.e.\ $\mu_{jk}$ is the unique Borel measure that satisfies
	 \begin{center}
	 	$\langle\delta_j,f(H)\delta_k\rangle=\int f(x)d\mu_{jk}(x)$
	 \end{center}
 	for any continuous function on the spectrum of $J$. Also, denote $\mu_{kk}\coloneqq \mu_k$, and $\mu\coloneqq \sum\limits_{k=1}^{n}\mu_k$. Note that
 	\begin{center}
 		$\left(M(z)\right)_{jk}=\int\frac{d\mu_{jk}(x)}{x-z}$
 	\end{center}
 	i.e.\ $\left(M(z)\right)_{jk}$ is the Borel transform of $\mu_{jk}$, and $\tr{M}$ is the Borel transform of $\mu$. For $1\leq k\leq n$, let
	\begin{center}
		$A_k=\left\{E\in\mathbb{R}:\frac{d\mu_k}{d\mu}(E)>0\right\}$.
	\end{center}
	Since $1=\frac{d\mu}{d\mu}=\sum\limits_{k=1}^{n}\frac{d\mu_k}{d\mu}$, we get that $A\coloneqq\bigcup\limits_{k=1}^{n}A_k$ supports $\mu$. Let $1\leq k\leq n$ and let $E\in A_k$. By Theorem \ref{supp_thm}, we may assume that $\underset{\epsilon\rightarrow0}{\lim}\im(\tr{M}(E+i\epsilon))=\infty$. By Theorem \ref{kac_thm}, we get that
	\begin{center}
		$\underset{\epsilon\rightarrow0}{\lim}\frac{\im(M_{kk}(E+i\epsilon))}{\im(\tr{M}(E+i\epsilon))}>0$
	\end{center}
	and so $\underset{\epsilon\rightarrow0}{\lim}\im(M_{kk}(E+i\epsilon))=\infty$. Thus, we may assume that $\mu_k|_{A_k}$ is singular with respect to the Lebesgue measure. In addition, $\mu|_{A_k}\ll\mu_k|_{A_k}$, and so for any $1\leq j\leq n$, $\mu_{jk}|_{A_k}\ll\mu_k|_{A_k}$. By Theorem \ref{polth}, this implies that for $\mu_k$-almost every $E\in A_k$ the limit
	\begin{center}
		$\underset{\epsilon\rightarrow0}{\lim}\frac{M_{jk}(E+i\epsilon)}{M_{kk}(E+i\epsilon)}\coloneqq\alpha_j$
	\end{center}
	Now, let $e_k$ be the $k$'th element in the standard basis of $\mathbb{C}^n$. We have
	\begin{center}
		$0=\underset{\epsilon\rightarrow0}{\lim}\frac{1}{|M_{kk}(E+i\epsilon)|}\|e_k\|=
		\underset{\epsilon\rightarrow0}{\lim}\|\frac{e_k}{M_{kk}(E+i\epsilon)}\|=
		\underset{\epsilon\rightarrow0}{\lim}\|M\left(E+i\epsilon\right)^{-1}\left(M(E+i\epsilon)\frac{e_k}{M_{kk}(E+i\epsilon)}\right)\|$.
	\end{center}
	By the fact that $M(E+i\epsilon)(e_k)=\begin{pmatrix}
		M_{1k}(E+i\epsilon)\\
		\vdots\\
		M_{nk}(E+i\epsilon)
	\end{pmatrix}$, we get
	\begin{center}
		$0=\underset{\epsilon\rightarrow0}{\lim}M(E+i\epsilon)^{-1}\begin{pmatrix}
			\frac{M_{1k}(E+i\epsilon)}{M_{kk}(E+i\epsilon)}\\
			\vdots\\
			\frac{M_{nk}(E+i\epsilon)}{M_{kk}(E+i\epsilon)}
		\end{pmatrix}$.
	\end{center}
	Taking into account (\ref{adj_eq}), we get that for every $1\leq j\leq n$,
	\begin{equation}\label{limit_eq}
		\underset{\epsilon\rightarrow0}{\lim}\sum\limits_{\underset{v_l\sim v_j}{1\leq l\leq n}}a_{\left(v_l,v_j\right)}\frac{M_{lk}(E+i\epsilon)}{M_{kk}(E+i\epsilon)}+\frac{M_{jk}(E+i\epsilon)}{M_{kk}(E+i\epsilon)}\cdot\frac{1}{m_j(E+i\epsilon)}=0.
	\end{equation}
	Define $\widetilde{\varphi}$ on $C$ by $\widetilde{\varphi}_{v_j}=\alpha_j$ for every $1\leq j\leq n$. We claim that $\left(J|_C\widetilde{\varphi}\right)_{v_j}=E\widetilde{\varphi}_{v_j}$ whenever $G_j$ consists of a single vertex, and so by Claim \ref{ext_claim}, it can be extended to a solution $\varphi$ to (\ref{ev_eq}) on $G$. So fix such a $j$. Clearly, we have
	\begin{center}
		$\underset{\epsilon\rightarrow0}{\lim}\frac{1}{m_j(E+i\epsilon)}=q_j-E$
	\end{center}
	and by (\ref{limit_eq}), we get that $\sum\limits_{\underset{v_l\sim v_j}{1\leq l\leq n}}a_{\left(v_l,v_j\right)}\alpha_l=-(b_j-E)\alpha_j$, and so
	\begin{center}
		$\left(J|_C\widetilde{\varphi}\right)_{v_j}=\sum\limits_{\underset{v_l\sim v_j}{1\leq l\leq n}}a_{\left(v_l,v_j\right)}\alpha_l+b_j\alpha_j=-(q_j-E)\alpha_j+b_j\alpha_j=E\alpha_j=E\widetilde{\varphi}_{v_j}$
	\end{center}
	as required. Let $\varphi$ be the completion of $\widetilde{\varphi}$ to a solution of (\ref{ev_eq}) on $G$. Note that $\varphi$ is non-trivial since $\varphi_{v_k}=\alpha_k=1$. We claim that $\varphi$ is subordinate. Let $1\leq j\leq n$ such that $G_j$ is a half-line.
	
	If $\alpha_j=0$ and there exists a sequence $\left(\epsilon_n\right)_{n=1}^{\infty}$ such that $\underset{n\rightarrow\infty}{\lim}m_j(E+i\epsilon_n)\neq 0$, then we get that $\sum\limits_{\underset{v_l\sim v_j}{1\leq l\leq n}}a_{\left(v_l,v_j\right)}\alpha_l=0$ and so by the difference equation which $\varphi$ satisfies, we get that $\varphi$ vanishes on $G_j$. Otherwise, $\underset{\epsilon\rightarrow0}{\lim}m_j(E+i\epsilon)=0$, and by Theorem \ref{gilpe_th}, any solution which satisfies $\varphi(v_j)=0$ is subordinate on $G_j$ (since in that case, $\lim\limits_{\epsilon\rightarrow0}m_j(E+i\epsilon)=\cot\theta$ for $\theta=\frac{\pi}{2}$).
	
	Assume now that $\alpha_j\neq0$. By (\ref{limit_eq}), this implies that the limit $\underset{\epsilon\rightarrow0}{\lim}\frac{1}{m_j(E+i\epsilon)}$ exists and is real, and so there exists $\theta\in\left[0,\pi\right)$ such that $\underset{\epsilon\rightarrow0}{\lim}m(E+i\epsilon)=\cot(\theta)$. By Theorem \ref{gilpe_th}, this implies that if there exists $\lambda\neq 0$ such that
	\begin{equation}\label{bc_sub_sol}
		\left(\sum\limits_{\underset{v_l\sim v_j}{1\leq l\leq n}}a_{\left(v_l,v_j\right)}\alpha_l,\alpha_j\right)=\lambda\left(-\sin(\theta),\cos(\theta)\right)
	\end{equation}
	then $\varphi$ is subordinate on $G_j$. Note that $\alpha_j\neq0$ implies that $\theta\neq\frac{\pi}{2}$. By (\ref{limit_eq}), we get that
	\begin{center}
		$\sum\limits_{\underset{v_l\sim v_j}{1\leq l\leq n}}a_{\left(v_l,v_j\right)}\alpha_l=-\frac{\sin(\theta)}{\cos(\theta)}\alpha_j$
	\end{center}
	and so
	\begin{center}
		$\left(\sum\limits_{\underset{v_l\sim v_j}{1\leq l\leq n}}a_{\left(v_l,v_j\right)}\alpha_l,\alpha_j\right)=\left(-\frac{\sin(\theta)}{\cos(\theta)}\alpha_j,\alpha_j\right)$.
	\end{center}
	Multiplying by $\frac{\cos(\theta)}{\alpha_j}$, we get (\ref{bc_sub_sol}) as required.
\end{proof}
\begin{remark}\label{help_rem}
	Note that the choice of the k'th column, i.e.\ setting \mbox{$\alpha_j=\lim\limits_{\epsilon\rightarrow0}\frac{M_{jk}(E+i\epsilon)}{M_{kk}(E+i\epsilon)}$} is only done so that the resulting solution will be non-trivial. Namely, for any $1\leq l\leq n$, if there exists some $1\leq p\leq n$ such that $\underset{\epsilon\rightarrow0}{\lim}\frac{M_{pl}(E+i\epsilon)}{M_{kk}(E+i\epsilon)}\neq 0$, then setting
	$\alpha_j=\underset{\epsilon\rightarrow0}{\lim}\frac{M_{jl}(E+i\epsilon)}{M_{kk}(E+i\epsilon)}$ for any $1\leq j\leq n$ will also generate a subordinate solution.
\end{remark}
\section{Proof of Theorem \ref{mul_thm}}\label{mul_sec}
Let $H$ be a self-adjoint operator acting on a Hilbert space $\mathcal{H}$ and let $P$ be the projection-valued measure associated with $H$ by the spectral theorem. It is well known (see, e.g., \cite[Chapter 7]{BS}), that there exist a collection of Hilbert spaces $\left\{\mathcal{H}_E:E\in\mathbb{R}\right\}$ so that $H$ is unitarily equivalent to multiplication by the free variable $E$ on the space $\widetilde{\mathcal{H}}\coloneqq\int_{\mathbb{R}}^{\oplus}\mathcal{H}_Ed\mu\left(E\right)$ whenever $\mu$ is a Borel measure on $\sigma\left(H\right)$ for which $\mu\left(A\right)=0\iff P\left(A\right)=0$ for any Borel set $A$. The measure $\mu$ is not unique, but it is determined (up to unitary equivalence) by its null sets. In particular, if $v_1,\ldots,v_n$ is cyclic for $H$, then $\mu$ can be taken to be the sum $\mu_{v_1}+\ldots+\mu_{v_n}$. The spectral multiplicity function is then given by $N_H\left(E\right)=\dim\mathcal{H}_E$ and is defined $\mu$-almost everywhere.

Let $J$ be a Jacobi matrix on a star-like graph $G$, and let $M,\Omega,\mu$ be as in the previous section. We will use the following
\begin{prop}
	For $\mu_s$-almost every $E\in\mathbb{R}$,
	\begin{equation}\label{mult_eq}
		N_J\left(E\right)=\rank\omega\left(E\right)
	\end{equation}
	where $\omega\left(E\right)$ is defined by $\left(\omega\left(E\right)\right)_{ij}=\frac{d\mu_{ij}}{d\mu}\left(E\right)$.
\end{prop}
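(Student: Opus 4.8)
The plan is to realize $J$ explicitly as multiplication by the independent variable on a direct integral whose fiber at $E$ has dimension $\rank\omega(E)$, and then to invoke the uniqueness (up to $\mu$-null sets) of the multiplicity function recalled at the beginning of this section. First I would check that $\omega$ is well defined $\mu$-a.e. Since $\Omega$ is a positive semi-definite matrix-valued measure, for each Borel set $A$ the matrix $\Omega(A)$ is positive semi-definite, so $\abs{\mu_{ij}(A)}^2=\abs{\Omega_{ij}(A)}^2\le\Omega_{ii}(A)\Omega_{jj}(A)=\mu_i(A)\mu_j(A)\le\mu(A)^2$. Summing over a partition shows $\abs{\mu_{ij}}\le\mu$, hence every entry $\mu_{ij}=\Omega_{ij}$ is absolutely continuous with respect to $\mu=\tr\Omega$. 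Therefore the density $\omega(E)=\frac{d\Omega}{d\mu}(E)$ exists for $\mu$-a.e.\ $E$, is positive semi-definite, and its entries are exactly the $\frac{d\mu_{ij}}{d\mu}(E)$ from the statement; moreover $\tr\omega(E)=\sum_k\frac{d\mu_k}{d\mu}(E)=1$, so $\|\omega(E)\|_{\mathrm{op}}\le1$.

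Next I would construct the direct integral. Consider the space of $\mathbb{C}^n$-valued functions equipped with the (possibly degenerate) semi-inner-product $\langle f,g\rangle=\int\langle f(E),\omega(E)g(E)\rangle_{\mathbb{C}^n}\,d\mu(E)$, and let $\mathcal{K}$ be the Hilbert space obtained after quotienting out the null functions and completing. This is precisely the direct integral $\int_{\mathbb{R}}^{\oplus}\mathcal{H}_E\,d\mu(E)$ whose fiber $\mathcal{H}_E$ is $\mathbb{C}^n$ carrying the form $\omega(E)$, i.e.\ $\mathbb{C}^n/\ker\omega(E)\cong\im\omega(E)$, so that $\dim\mathcal{H}_E=\rank\omega(E)$. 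Using that $\{\delta_1,\ldots,\delta_n\}$ is cyclic (Claim \ref{cyclic_clame}), I would define $U$ on the dense set of vectors $p(J)\delta_j$, with $p$ a polynomial, by $U\bigl(p(J)\delta_j\bigr)=p\cdot e_j$, where $e_j$ is the $j$-th standard basis vector regarded as a constant function. The computation $\langle\delta_i,\overline{p}(J)q(J)\delta_j\rangle=\int\overline{p}\,q\,d\mu_{ij}=\int\overline{p}\,q\,\omega_{ij}\,d\mu$ shows $U$ is isometric, and it intertwines left-multiplication by $J$ with multiplication by $E$ on $\mathcal{K}$.

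To finish I must show that $U$ is onto, i.e.\ that $\{p\cdot e_j\}$ spans a dense subspace of $\mathcal{K}$. Since $J$ is bounded, $\mu$ is compactly supported; the continuous $\mathbb{C}^n$-valued functions are dense in $\mathcal{K}$, each scalar component is uniformly approximable by polynomials via the Weierstrass theorem, and the bound $\tr\omega=1$ gives $\|f\|_{\mathcal{K}}\le\|f\|_{L^2(\mu;\mathbb{C}^n)}$, so uniform approximation upgrades to $\mathcal{K}$-approximation. Hence $U$ is a unitary conjugating $J$ into multiplication by $E$ on $\int_{\mathbb{R}}^{\oplus}\mathcal{H}_E\,d\mu(E)$. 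Because $\mu$ charges exactly the non-$P$-null sets, the uniqueness of the multiplicity function yields $N_J(E)=\dim\mathcal{H}_E=\rank\omega(E)$ for $\mu$-a.e.\ $E$, and in particular for $\mu_s$-a.e.\ $E$, which is the claim.

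The main obstacle is not the isometry identity, which is a routine polynomial computation, but the careful handling of the direct-integral formalism when the fiber dimension $\rank\omega(E)$ varies with $E$: one must produce a measurable field of bases adapted to $\im\omega(E)$ so that $\mathcal{K}$ is genuinely a measurable direct integral and the measurable function $E\mapsto\rank\omega(E)$ is its dimension function. The density/surjectivity step, while standard, is where the compact support of $\mu$ and the normalization $\tr\omega=1$ are essential.
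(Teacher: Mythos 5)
Your proof is correct, but it takes a genuinely different route from the paper. The paper never constructs a spectral representation: it takes the abstract direct integral $\int_{\mathbb{R}}^{\oplus}\mathcal{H}_E\,d\mu(E)$ furnished by the spectral theorem as given, pushes the cyclic vectors through the given unitary to get $\psi_j(E)=U\delta_j(E)\in\mathcal{H}_E$, computes $d\mu_{ij}=\langle\psi_i(E),\psi_j(E)\rangle_{\mathcal{H}_E}\,d\mu$, so that $\omega(E)$ is identified as the Gram matrix of $\psi_1(E),\ldots,\psi_n(E)$, and then uses cyclicity to show that these vectors span $\mathcal{H}_E$ for $\mu$-a.e.\ $E$; the identity $N_J(E)=\rank\omega(E)$ is then just the statement that the rank of a Gram matrix equals the dimension of the span. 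You instead build the representation from scratch: you realize $J$ on the concrete space $L^2(\Omega)$ attached to the matrix measure $d\Omega=\omega\,d\mu$ (a Hellinger--Hahn-type construction), prove unitarity via the polynomial isometry computation plus Weierstrass density, and then appeal to the uniqueness of the multiplicity function to conclude $N_J=\rank\omega$ a.e. Both arguments are sound and both use cyclicity of $\{\delta_1,\ldots,\delta_n\}$ at the crucial step (you use it for surjectivity of $U$, the paper for fullness of the span in each fiber). What the paper's approach buys is brevity: by working inside the decomposition whose existence is already assumed in the definition of $N_J$, it needs no measurable-field technicalities, no completeness or density arguments, and no separate appeal to the Hahn--Hellinger uniqueness theorem beyond the well-definedness of $N_J$ itself. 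What your approach buys is explicitness and self-containedness: it exhibits the spectral representation concretely, derives $\mu_{ij}\ll\mu$ directly from positive semi-definiteness of $\Omega$ (a nice observation the paper leaves implicit), and makes transparent exactly where the fiber dimension $\rank\omega(E)$ comes from; the price is that you must handle the varying-dimension measurable field and lean on the uniqueness theorem, which your last paragraph correctly flags as the real technical burden.
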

\begin{remark}
	As already mentioned, $\omega\left(E\right)$ is also defined $\mu_s$-almost everywhere.
\end{remark}
\begin{proof}
	Let $U:\ell^2\left(G\right)\to\widetilde{\mathcal{H}}$ be the unitary transformation which sastisfies $UJ\varphi=\widetilde{J}U\varphi$ for any $\varphi\in\ell^2\left(G\right)$, where $\widetilde{J}$ is the operator of multiplication by the free variable. Note that for any $\varphi\in\ell^2\left(G\right)$, $U\varphi$ is a vector-valued function $E\to U\varphi\left(E\right)$ such that $U\varphi\left(E\right)\in\mathcal{H}_E$ for $\mu$-almost every $E\in\mathbb{R}$, and $\int_\mathbb{R}\|U\varphi\left(E\right)\|^2d\mu\left(E\right)<\infty$. Recall that $\delta_1,\ldots,\delta_n$ is a cyclic set for $J$, and denote $\psi_j=U\delta_j$ for $1\leq j\leq n$. Let $\mu_{\psi_i,\psi_j}$ be the spectral measure of $\psi_i$ and $\psi_j$ w.r.t.\ $\widetilde{J}$. For any Borel set $A\subseteq\mathbb{R}$, we have
	\begin{center}
		$\mu_{\psi_i,\psi_j}\left(A\right)=\int\mathbbm{1}_Ad\mu_{\psi_i,\psi_j}=\langle\psi_i,\mathbbm{1}_A\left(
		\widetilde{J}\right)\psi_j\rangle=\int_A\langle\psi_i\left(E\right),\psi_j\left(E\right)\rangle_{\mathcal{H}_E}d\mu\left(E\right)$.
	\end{center}
	Thus, if we denote $f_{ij}\left(E\right)=\langle\psi_i\left(E\right),\psi_j\left(E\right)\rangle_{\mathcal{H}_E}$, then
	\begin{center}
		$d\mu_{ij}=d\mu_{\psi_i,\psi_j}=f_{ij}d\mu$.
	\end{center}
	and so we have
	\begin{center}
		$\left(\omega\left(E\right)\right)_{ij}=\frac{d\mu_{ij}}{d\mu}\left(E\right)=\langle\psi_i\left(E\right),\psi_j\left(E\right)\rangle_{\mathcal{H}_E}$
	\end{center}
	Now, we claim that for $\mu$-almost every $E\in\mathbb{R}$, $\vspan\left\{\psi_1\left(E\right),\ldots,\psi_n\left(E\right)\right\}=\mathcal{H}_E$. Assume not. Then there exists $0\neq\psi\in\int^{\oplus}_{\mathbb{R}}$ and a Borel set $A\subseteq\mathbb{R}$ such that $\mu\left(A\right)>0$ and $\psi\left(E\right)\perp\vspan\left\{\psi_1\left(E\right),\ldots,\psi_n\left(E\right)\right\}$ for $\mu$-almost every $E\in A$. Thus, for every $k\in\mathbb{N}$ and for every $1\leq i\leq n$, we have
	\begin{center}
			$0=\langle\widetilde{J}^k\psi_j,\psi\rangle_1=\langle\widetilde{J}^kUU^{-1}\psi_j,UU^{-1}\psi\rangle_1=$ \\$=\langle UJ^kU^{-1}\psi_j,UU^{-1}\psi\rangle_1=\langle J^k\delta_j,U^{-1}\psi\rangle_2$
	\end{center}
	where $\langle\cdot,\cdot\rangle_1$ is the inner product in the direct integral, and $\langle\cdot,\cdot\rangle_2$ is the inner product in $\ell^2\left(G\right)$. This implies that $U^{-1}\psi=0$ and so $\psi\equiv 0$ which contradicts our assumption. Finally, (\ref{mult_eq}) follows from the fact that if $\vspan\left\{v_1,...,v_k\right\}=\mathbb{C}^n$, then the rank of the matrix $A_{ij}=\langle v_i,v_j\rangle$ is $n$
\end{proof}
We are now ready to prove Theorem \ref{mul_thm}.
\begin{proof}
	As before, let $A_k=\left\{E\in\mathbb{R}:\frac{d\mu_k}{d\mu}(E)>0\right\}$ for $1\leq k\leq n$. By Theorem \ref{polth}, for every $1\leq j,l\leq n$ and for $\mu_s$-almost every $E\in A_k$, we have
	\begin{center}
		$\left(\omega(E)\right)_{lj}=\frac{d\mu_k}{d\mu}(E)\cdot\lim\limits_{\epsilon\rightarrow0}\frac{M_{lj}(E+i\epsilon)}{M_{kk}(E+i\epsilon)}$.
	\end{center}
	Since $0<\frac{d\mu_k}{d\mu}(E)<\infty$ for $\mu$-almost every $E\in A_k$, we get that if $\rank\omega(E)=m$, then, by Remark \ref{help_rem}, there are at least $m$ independent subordinate solutions to (\ref{ev_eq}), as required.
\end{proof}
In the next section, we will construct an example in which the inequality in Theorem \ref{mul_thm} is strict. Also, in the case where $G=\mathbb{Z}$, the inequality becomes an equality, and so in that sense, this result is optimal. However, the next proposition shows that in the purely singular continuous part of the spectrum, Theorem \ref{mul_thm} can be improved.
\begin{prop}\label{scp_prop}
	Denote by $k$ the number of half-lines emanating from $C$, i.e.\ $k=\#\left\{j:G_j\cong\mathbb{N}\right\}$ and assume that $k>1$. Then for $\mu$-almost every \mbox{$E\in\sigma_{sc}\left(H\right)\setminus\sigma_{pp}\left(H\right)$}, \mbox{$N_H\left(E\right)\leq k$}.
\end{prop}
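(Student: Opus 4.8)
The plan is to bound $\rank\omega(E)$, since by the preceding proposition $N_H(E)=\rank\omega(E)$ for $\mu_s$-almost every $E$. Fix $E\in\sigma_{sc}(H)\setminus\sigma_{pp}(H)$ in the full-measure set where all the relevant boundary values exist. Recall from the proof of Theorem \ref{mul_thm} that for each $k$ with $E\in A_k$, the $k$'th column of $\omega(E)$ is a positive multiple of the vector $\alpha^{(k)}=(\alpha^{(k)}_1,\ldots,\alpha^{(k)}_n)$ with $\alpha^{(k)}_j=\lim_{\epsilon\to0}M_{jk}(E+i\epsilon)/M_{kk}(E+i\epsilon)$, and that this vector extends to a subordinate solution $\varphi^{(k)}$ of (\ref{ev_eq}) on $G$. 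Thus $\rank\omega(E)$ equals the dimension of the span of these solution-vectors $\{\alpha^{(k)}:E\in A_k\}$ restricted to $C$, and it suffices to show this span has dimension at most $k$ (the number of half-lines).

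The key observation is that in the singular continuous regime one can rule out the ``purely compact'' subordinate solutions — those supported only on $C$ — which is exactly what allows the bound to drop from $\dim S(E)$ (which counts all subordinate solutions) down to the number of half-lines. Concretely, I would argue as follows. First I would show that for $\mu$-almost every such $E$, on each half-line $G_j\cong\mathbb{N}$ there is a genuine subordinate solution, i.e.\ $\lim_{\epsilon\to0}m_j(E+i\epsilon)$ exists as a real number or the solution with $\varphi(v_j)=0$ is subordinate; this is the content of Theorem \ref{gilpe_th} together with the analysis already carried out in the proof of Theorem \ref{main_thm}. The point is then that a subordinate solution of (\ref{ev_eq}) on $G$ is uniquely determined, \emph{on each half-line}, by its value $\alpha_j$ at the attaching vertex $v_j$, together with the corresponding boundary-condition angle $\theta_j$ fixed by $\lim_{\epsilon\to0}m_j(E+i\epsilon)=\cot\theta_j$. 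Hence the restriction map sending a subordinate solution $\varphi$ to the $k$-tuple $(\varphi_{v_j})_{j:\,G_j\cong\mathbb{N}}$ of its values at the $k$ half-line roots is well defined.

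I would then show this restriction map is injective, which immediately gives $\rank\omega(E)=\dim S(E)\le k$. Injectivity is where the hypothesis $E\notin\sigma_{pp}(H)$ enters: if two subordinate solutions agreed at all $k$ half-line roots, their difference $\psi$ would be a subordinate solution vanishing at every half-line root, hence vanishing identically on all the half-lines. But then $\psi$ is supported on the finite compact component $C$, so if $\psi\ne0$ it is an $\ell^2$ eigenfunction of $J$ with eigenvalue $E$, forcing $E\in\sigma_{pp}(H)$ — a contradiction. (One must also check that such a $\psi$, being compactly supported and solving $J\psi=E\psi$, genuinely lies in $\ell^2(G)$; this is immediate since its support is finite.) Therefore $\psi\equiv0$ and the map is injective.

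The main obstacle I anticipate is the careful bookkeeping at half-lines where $\alpha_j=0$: in the proof of Theorem \ref{main_thm} there is a dichotomy (either $\varphi$ vanishes on $G_j$, or $\lim_{\epsilon\to0}m_j=0$ and the Dirichlet solution is subordinate), and one must verify that along the singular-continuous part the degenerate branch does not secretly introduce an extra independent solution that escapes the $k$-dimensional count. I expect to handle this by noting that on the singular \emph{continuous} set $E$ cannot be an eigenvalue, so no nonzero subordinate solution can be supported purely on $C$, which is precisely the injectivity statement above; the $\alpha_j=0$ cases are then absorbed because a half-line solution with $\varphi_{v_j}=0$ is still uniquely pinned down (up to scalar) by subordinacy. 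Assembling these pieces yields $N_H(E)=\rank\omega(E)\le k$ for $\mu$-almost every $E\in\sigma_{sc}(H)\setminus\sigma_{pp}(H)$.
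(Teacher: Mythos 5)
Your overall strategy --- reduce the multiplicity bound to bounding the space of subordinate solutions, and use $E\notin\sigma_{pp}\left(H\right)$ to kill any subordinate solution supported on the compact component --- is exactly the mechanism of the paper's proof. But there is a genuine gap at the step you yourself flag, and your patch for it does not work: the map sending a subordinate solution $\varphi$ to its values $\left(\varphi_{v_j}\right)_j$ at the $k$ attaching vertices is \emph{not} injective modulo solutions supported on $C$. The failure occurs precisely when, on some half-line $G_j$, the (unique up to scalar) half-line subordinate solution itself vanishes at the root $v_j$ --- the case $\lim_{\epsilon\rightarrow 0}m_j\left(E+i\epsilon\right)=0$, i.e.\ $\theta_j=\frac{\pi}{2}$, in the notation of the proof of Theorem \ref{main_thm}. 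In that case two subordinate solutions on $G$ can agree (both vanish) at every root and yet differ on $G_j$ by a nonzero multiple of the half-line subordinate solution; their difference $\psi$ then vanishes at all roots but is \emph{not} supported on $C$: it is nonzero on $G_j$, in general not in $\ell^2\left(G\right)$, so it is not an eigenfunction and no contradiction with $E\notin\sigma_{pp}\left(H\right)$ arises. Saying that such a solution is ``pinned down up to scalar by subordinacy'' is exactly the problem: evaluation at the root cannot detect that scalar when the root value is $0$, and detecting it is what injectivity requires. (A smaller issue: your asserted equality $\rank\omega\left(E\right)=\dim S\left(E\right)$ is not justified; but it is also not needed, since the inequality $N_H\left(E\right)\leq\dim S\left(E\right)$ of Theorem \ref{mul_thm} suffices.)

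The fix is the paper's one small but essential twist: instead of evaluating at the roots, choose for each half-line $G_j$ a vertex $w_j\in G_j$ at which every nonzero subordinate solution on $G_j$ is nonvanishing. Such a vertex exists because the subordinate solution on a half-line is unique up to scalar, and a nonzero solution of the three-term recurrence cannot vanish at two consecutive sites (the off-diagonal entries $a_e$ are nonzero), hence cannot vanish everywhere. With this choice, a subordinate solution on $G$ vanishing at all the $w_j$ restricts on each $G_j$ to either zero or a nonzero multiple of the half-line subordinate solution, and the latter is excluded; so it vanishes on every half-line, is supported on the finite set $C$, lies in $\ell^2\left(G\right)$, and forces $E\in\sigma_{pp}\left(H\right)$. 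Running your linear-algebra step with the vertices $w_j$ in place of the roots --- any $k+1$ linearly independent subordinate solutions (which exist by Theorem \ref{mul_thm} if $N_H\left(E\right)\geq k+1$) have linearly dependent evaluation vectors in $\mathbb{C}^k$, giving a nontrivial combination vanishing at all $w_j$ --- yields the proposition; this is precisely the paper's argument.
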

\begin{proof}
	Assume not. Let $E\in\sigma_{sc}\left(H\right)\setminus\sigma_{pp}\left(H\right)$ and let $\varphi_1,\ldots,\varphi_{k+1}$ be linearly independent solutions of $H\varphi=E\varphi$. For every $1\leq j\leq k$, let $v_j\in G_j$ be a vertex on which every non-zero subordinate solution on $G_j$ does not vanish. Such a vertex exists due to the uniqueness of the subordinate solution on a half-line. For any $1\leq i\leq k+1$, define $u_i=\left(\varphi_i\left(v_1\right),\ldots,\varphi_i\left(v_k\right)\right)$. Then $\left\{u_1,\ldots,u_{k+1}\right\}$ is a subset of $\mathbb{C}^k$ with $k+1$ vectors, and so it is linearly dependent. Then there exist $\alpha_1,\ldots,\alpha_{k+1}\in\mathbb{C}$ s.t.\ $\sum\limits_{i=1}^{k+1}\alpha_iu_i=0$. By the uniqueness of subordinate solutions on half-lines, this implies that the solution $\varphi=\sum\limits_{i=1}^{k+1}\alpha_i\varphi_i$ vanishes on each half-line. But $\varphi\neq 0$ since the set $\varphi_1,\ldots,\varphi_{k+1}$ is linearly independent, and so $\varphi$ is supported on a finite set, and in particular $\varphi\in\ell^2\left(G\right)$. This implies that $E\in\sigma_{pp}\left(H\right)$, which contradicts our assumption.
\end{proof}

\section{Remarks}
\subsection{The multiplicity of Schr\"{o}dinger operators on star-graphs}
In \cite{SW}, the multiplicity of Schr\"{o}dinger operators on star-graphs is studied in the continuous setting. The graph $\Gamma$ is given by the gluing of a finite number of half-lines, and a Schr\"{o}dinger operator $H$ on $\Gamma$ is given in the following way. On each half-line $\ell$, $H$ acts as a Schr\"{o}dinger operator on $\ell$, i.e.\ there exists $q_\ell:\mathbb{R}_{\geq 0}\to\mathbb{R}$ s.t.\ for every $\varphi$ in the domain of $H$,
\begin{equation}\label{eq_loc_sch}
	H\left(\varphi|_\ell\right)=-\left(\varphi|_\ell\right)^{''}+q_\ell\varphi|_{\ell}.
\end{equation}
The domain of $H$ consists of functions on $\Gamma$ which satisfy some natural properties on each half-line in order for (\ref{eq_loc_sch}) to make sense, along with a boundary condition at the gluing point. We give a general description of the analysis done in \cite{SW}. Assume that $\Gamma$ consists of the gluing of $\ell_1,\ldots,\ell_n$. For any $1\leq i\leq n$, define $H_i$ as the Schr\"{o}dinger operator on $\ell_i$ which acts by $H\varphi=-\varphi''+q_{\ell_i}\varphi$, along with a Dirichlet boundary condition at the origin. Denote by $\mu_i$ a scalar spectral measure for $H_i$, and by $\left(\mu_i\right)_s$  the singular part of $\mu_i$ with respect to the Lebesgue measure. Denote also by $P$ the projection-valued spectral measure of $H$, and by $P_s$ its singular part with respect to the Lebesgue measure. The main result of \cite{SW} essentially says the following. A support for $P_s$ can be given by $S=S_1\sqcup S_2$, where $S_1$ consists of energies for which at least two of the singular parts $\left(\mu_1\right)_s,\ldots,\left(\mu_n\right)_s$ overlap, and $P|_{S_2}$ is mutually singular with respect to each of $\left(\mu_1\right)_s,\ldots,\left(\mu_n\right)_s$. In addition, the multiplicity on $S_1$ is equal to the number of overlaps, and the multiplicity on $S_2$ is $1$.

We now describe the discrete analogue of this result, which can be obtained with our approach. Let $G$ be a discrete star-graph, and let $J$ be a Jacobi matrix on $G$. Assume that $G$ consists of $n$ half-lines, and let $v$ be the single vertex of $G$ which satisfies $\deg\left(v\right)=n$. Denote the neighbors of $v$ by $v_1,\ldots,v_n$. For every $1\leq k\leq n$, denote by $G_k$ the connected component of $v_k$ in the graph $G\setminus\left(v,v_k\right)$, and by $J_k$ the operator $P_{G_k}JP_{G_k}$. $J_k$ is simply $J$ restricted to $G_k$ along with a Dirichlet boundary condition at the origin. Denote by $\rho_k$ the spectral measure of $\delta_{v_k}$ w.r.t.\ $J_k$, and by $A_k$ the support of $\left(\rho_k\right)_s$ given by Theorem \ref{gilpe_th}. Finally, let $\mu$ be the scalar spectral measure of $J$ as defined in Section \ref{per_slike_subsection}, and by $S$ the support of $\mu_s$ given by Theorem \ref{main_thm}.
\begin{theorem}
	Denote by $S_1$ the set of real numbers for which there exist at least two indices $1\leq i<j\leq n$ for which $A_i\cap A_j\neq\emptyset$, and denote $S_2=\left(S_1\right)^c$. Then
	\begin{enumerate}
		\item $S_1\subseteq S$ and $N_H|_{S_1}\leq n-1$ for $\mu_s$-almost every $E\in\mathbb{R}$.
		\item $N_{J}|_{S_2\cap S}=1$ for $\mu_s$-almost every $E\in\mathbb{R}$.
	\end{enumerate}
\end{theorem}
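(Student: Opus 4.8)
The plan is to reduce the statement to a single structural fact about subordinate solutions on the star graph and then feed it into Theorem~\ref{mul_thm} (which gives $N_J(E)\le\dim S(E)$ for $\mu_s$-almost every $E$) and Theorem~\ref{main_thm} (which identifies the energies in $S$ with those admitting a subordinate solution on $G$). Throughout I write $K(E)=\{k:E\in A_k\}$ for the set of half-lines on which the Dirichlet solution is subordinate, and I use that on a half-line a subordinate solution, when it exists, is unique up to a scalar. I also recall that $N_J(E)\ge 1$ for $\mu$-almost every $E$, since the fibers of the direct-integral decomposition are nontrivial $\mu$-a.e.

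The main step is the claim that if $K(E)\ne\emptyset$ then every subordinate solution $\varphi$ of $J\varphi=E\varphi$ on $G$ vanishes at the central vertex $v$. To prove it, fix $k\in K(E)$ and let $y$ denote the neighbor of $v_k$ inside $G_k$. Since $E\in A_k$, the Dirichlet solution on $G_k$ is subordinate by Theorem~\ref{gilpe_th}, so by uniqueness $\varphi|_{G_k}$ is a scalar multiple of it and therefore satisfies the Dirichlet relation $a_{(v_k,y)}\varphi_y+b_{v_k}\varphi_{v_k}=E\varphi_{v_k}$. On the other hand $\varphi$ satisfies $J\varphi=E\varphi$ at $v_k$, which is the same relation with the extra term $a_{(v,v_k)}\varphi_v$; subtracting forces $a_{(v,v_k)}\varphi_v=0$, and hence $\varphi_v=0$ because $a_{(v,v_k)}\ne 0$. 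This boundary bookkeeping between the equation of $J$ at $v_k$ and the Dirichlet equation of $J_k$ is the crux, and the point I expect to require the most care.

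Next I compute $\dim S(E)$ using the linear functional $T\colon S(E)\to\mathbb{C}$, $T(\varphi)=\varphi_v$. If $\varphi\in\ker T$, then $\varphi_v=0$ collapses the equation at each $v_k$ to the Dirichlet relation, so $\varphi|_{G_k}$ is a multiple of the Dirichlet solution; subordinacy then forces $\varphi|_{G_k}=0$ unless $k\in K(E)$, while the equation at $v$ becomes the single nontrivial relation $\sum_{k\in K(E)}a_{(v,v_k)}\varphi_{v_k}=0$ (nontrivial because the Dirichlet solution does not vanish at its origin). Hence $\dim\ker T=\max(|K(E)|-1,0)$. The image of $T$ has dimension at most $1$, and is trivial when $K(E)\ne\emptyset$ by the previous step, so $\dim S(E)=|K(E)|-1$ if $K(E)\ne\emptyset$ and $\dim S(E)\le 1$ if $K(E)=\emptyset$.

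Finally I assemble the two assertions, noting that the dimension count above is valid for every $E$ and may thus be combined directly with the $\mu_s$-a.e.\ inequality of Theorem~\ref{mul_thm}. If $E\in S_1$ then $|K(E)|\ge 2$, so $\dim S(E)=|K(E)|-1\ge 1$, which shows $E\in S$ (hence $S_1\subseteq S$) and gives $N_J(E)\le\dim S(E)=|K(E)|-1\le n-1$. If $E\in S_2\cap S$ then $|K(E)|\le 1$; the case $|K(E)|=1$ would give $\dim S(E)=0$, contradicting $E\in S$, so $|K(E)|=0$ and therefore $1\le\dim S(E)\le 1$. Thus $N_J(E)\le\dim S(E)=1$, and combined with $N_J(E)\ge 1$ this yields $N_J(E)=1$ for $\mu_s$-almost every $E\in S_2\cap S$.
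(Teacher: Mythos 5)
Your proof is correct and follows essentially the same route as the paper: the crux --- that on any half-line with $k\in K(E)$ the Dirichlet relation forces every subordinate solution to vanish at the central vertex, that uniqueness of half-line subordinate solutions then reduces $S(E)$ to the values $\left(\varphi_{v_k}\right)_{k\in K(E)}$ subject to the single linear constraint at $v$, and that the conclusion follows from Theorem \ref{mul_thm} together with $N_J(E)\geq 1$ --- is exactly the paper's argument for both parts. The only difference is organizational: you package everything into the dimension formula $\dim S(E)=\left|K(E)\right|-1$ when $K(E)\neq\emptyset$ (and $\leq 1$ otherwise) via the evaluation functional $T(\varphi)=\varphi_v$, which handles both assertions at once, gives the sharper bound $N_J(E)\leq\left|K(E)\right|-1$, and treats general edge weights $a_{(v,v_k)}$ more transparently than the paper's explicit gluing of two Dirichlet solutions normalized to $\pm 1$.
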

\begin{proof}
	\begin{enumerate}
		\item Let $E\in S_1$. w.l.o.g.\ assume that $E\in A_1\cap A_2$. Let $\varphi_1$ be the subordinate solution to $J_1\varphi=E\varphi$ which satisfies $\varphi_1\left(v_1\right)=1$, and let $\varphi_2$ be the subordinate solution to $J_2\varphi=E\varphi$ which satisfies $\varphi_2\left(v_2\right)=-1$.
		Define $\varphi:G\to\mathbb{C}$ by
		\begin{center}
			$\varphi\left(u\right)=\begin{cases}
				\varphi_j\left(u\right) & u\in G_j,\,1\leq j\leq 2\\
				0 & otherwise
			\end{cases}$.
		\end{center}
		It is not hard to verify that $\varphi$ satisfies $H\varphi=E\varphi$, and that it is a subordinate solution, and so $E\in S$. For the second part, note that each subordinate solution $\psi$ must satisfy $\psi\left(0\right)=0$ due to the Dirichlet boundary condition which it satisfies on $G_1$. Thus, the eigenvalue equation at $v$ is given by
		\begin{center}
			$0=E\psi\left(0\right)=\sum\limits_{j=1}^{n}\psi\left(v_j\right)$
		\end{center}
		and so there are at most $n-1$ subordinate solutions. Now, the result follows from Theorem \ref{mul_thm}.
		\item Let $E\in S\cap S_2$, and let $\psi$ be a non-trivial subordinate solution of $H\varphi=E\varphi$. Assume that $\psi$ does not vanish on $G_1$. We claim that $\psi\left(0\right)\neq 0$. Indeed, if $\psi\left(0\right)=0$, then there must be at least one more index $j\neq 1$ such that $\psi$ does not vanish on $G_j$. Otherwise, we have
		\begin{center}
			$0=E\psi\left(0\right)=\sum\limits_{j=1}^{n}\psi\left(v_j\right)=\psi\left(v_1\right)$
		\end{center}
		and it follows that $\psi\equiv 0$. Thus, assume w.l.o.g.\ that $\psi\left(v_2\right)\neq 0$. This implies that $E\in A_1\cap A_2$, which contradicts our assumption that $E\in S_2$. Now, by uniqueness of the subordinate solution on each half-line, we get that $\psi\left(v\right)$ determines $\psi\left(v_j\right)$ for any $1\leq j\leq n$, and so $\dim S\left(E\right)=1$, which implies that $N_J\left(E\right)=1$ for $\mu_s$-almost every $E\in S_2\cap S$, as required.
	\end{enumerate}
\end{proof}

\subsection{Strict inequality in Theorem \ref{mul_thm}}
Consider a star-like graph $G$ for which $C$ is a triangle graph, and there is a half-line attached to every vertex of $C$ (see Figure \ref{sg_fig} (a)). As before, denote the vertices of $C$ by $v_1,v_2,v_3$ and the half-line with $v_i$ as an origin by $G_i$. It is well known (see, e.g.\, \cite[Chapter 7]{Ber}) that there is one-to-one correspondence between probability measures with infinite and bounded support and bounded Jacobi matrices on $\mathbb{N}$, and so in order to construct an example, we start by constructing the appropriate measures. Let $\mu_1$ be a probability measure on $\left[0,1\right]$ such that $\mu_1\left(\left\{0\right\}\right)>0$, and let $\mu_2$ be defined by $d\mu_2=\frac{1}{c}fd\lambda$, where $f(x)=\begin{cases}
	\frac{1}{\sqrt{x}} & 0<x<1\\
	0 & \text{otherwise}
\end{cases}$ and $c=\int_0^1\frac{dx}{\sqrt{x}}$. Clearly, $\mu_2$ is a probability measure on $\left[0,1\right]$ which is absolutely continuous w.r.t.\ $\lambda$. It is not hard to show that the Borel transform of $\mu_2$ satisfies $\left|m_{\mu_2}\left(0+i0\right)\right|=\infty$. For $\theta=\frac{\pi}{4}$ and for $k=1,2$, let $J^{\left(k\right)}$ be a Jacobi matrix on $\mathbb{N}$ such that the spectral measure of $J^{\left(k\right)}_\theta$ is $\mu_k$. We define a Jacobi matrix $J$ on $G$ in the following way: $J|_{G_1}=J|_{G_3}=J^{\left(1\right)}$, $J|_{G_2}=J^{{\left(2\right)}}$, and for every $e\in E\left(C\right)$, $a_e=1$. The fact that $\mu_1\left(\left\{0\right\}\right)>0$ implies that there exists $0\neq\varphi\in\ell^2\left(\mathbb{N}\right)$ such that $J^{\left(1\right)}_\theta\varphi_1=0$ and $\varphi_1\left(1\right)=1$. By Theorem \ref{gilpe_th}, any solution to $J^{\left(2\right)}\varphi=0$ which satisfies $\varphi\left(0\right)=-\varphi\left(1\right)$ is subordinate. We denote by $\varphi_2$ the solution which satisfies $\varphi_2\left(1\right)=-1$. Now, it is can be verified that $\psi_1:G\to\mathbb{C}$ which is defined by
\begin{center}
	$\psi|_{G_1}=\varphi_1,\,\psi|_{G_2}=-\varphi_1,\,\psi|_{G_3}=0$
\end{center}
satisfies $J\psi=0$. In addition, any other non-trivial eigenvector must be a multiple of $\psi$, and so $0$ is a simple eigenvalue. On the other hand, the solution $\widetilde{\psi}$ which is defined by
\begin{center}
	$\widetilde{\psi}|_{G_1}=\varphi_1,\,\widetilde{\psi}|_{G_2}=0,\,\widetilde{\psi}|_{G_3}=\varphi_2$
\end{center}
is also a subordinate solution which is linearly independent of $\psi$, and so $N_J\left(0\right)=1<2=\dim S\left(0\right)$.

\end{document}